\theoremstyle{plain}
\newtheorem{thm}{Theorem}[section]
\newtheorem*{thm*}{Theorem}
\newtheorem{lemma}[thm]{Lemma}
\newtheorem{prop}[thm]{Proposition}
\newtheorem{cor}[thm]{Corollary}
\theoremstyle{definition}
\newtheorem{defi}[thm]{Definition}
\newtheorem*{defi*}{Definition}
\newtheorem{rmk}[thm]{Remark}
\newtheorem{ex}[thm]{Example}
\DeclareMathOperator{\Aut}{Aut}
\providecommand{\bysame}{\leavevmode\hbox to3em{\hrulefill}\thinspace}
\providecommand{\MR}{\relax\ifhmode\unskip\space\fi MR }
\providecommand{\href}[2]{#2}
\begin{document}

\title{Automorphism groups of compact complex supermanifolds}
\author{Hannah Bergner}
\author{Matthias Kalus}

\address{Fakult\"at f\"ur Mathematik, Ruhr-Universit\"at Bochum,
Universit\"atsstr. 150, D-44780 Bochum, Germany}
\email{Hannah.Bergner-C9q@rub.de}
\email{Matthias.Kalus@rub.de}

\thanks{Financial support by SFB/TR 12 ``Symmetries and Universality in 
Mesoscopic Systems'' of the DFG is gratefully acknowledged.}

\begin{abstract}
Let $\mathcal M$ be a compact complex supermanifold. We prove that the 
set $\Aut_{\bar 0}(\mathcal M)$ of automorphisms of $\mathcal M$
can be endowed with the structure of a complex Lie group acting holomorphically 
on $\mathcal M$, so that its Lie
algebra is isomorphic to the Lie algebra of even holomorphic super vector fields
on $\mathcal M$. 
Moreover, we prove the existence of a complex Lie supergroup $\Aut(\mathcal M)$
acting holomorphically on $\mathcal M$ and satisfying a universal property.
Its underlying Lie group is $\Aut_{\bar 0}(\mathcal M)$ and its Lie superalgebra
is the Lie superalgebra of holomorphic super vector fields on $\mathcal M$.
This generalizes the classical theorem by Bochner and Montgomery that
the automorphism group of a compact complex manifold is a complex Lie group.
Some examples of automorphism groups of complex supermanifolds over 
$\mathbb P_1(\mathbb C)$ are provided.
\end{abstract}

\maketitle
\begin{center}
  {\bf Keywords:} compact complex supermanifold; automorphism group\\
  \vskip0.2cm
   {\bf MSC2010:}   54H15, 32M05, 32C11 \\
\end{center}

\section{Introduction}
The automorphism group of a compact complex manifold $M$ carries the structure 
of a complex Lie group which acts holomorphically on $M$ and whose Lie algebra 
consists of the holomorphic vector fields on $M$ 
(see \cite{BMGroupsAnalyticManifold}). 
In this article, we investigate how this result can be extended to the
category of compact complex supermanifolds.

Let $\mathcal M$ be a compact complex supermanifold, i.e. a complex 
supermanifold whose underlying manifold is compact. 
An automorphism of $\mathcal M$ is a biholomorphic morphism 
$\mathcal M\rightarrow \mathcal M$.
A first candidate for the automorphism group of such a supermanifold is
be the set of automorphisms, which we denote by $\Aut_{\bar 0}(\mathcal M)$.
However, every automorphism $\varphi$ of a supermanifold $\mathcal M$ (with 
structure sheaf $\mathcal O_\mathcal M$) is ``even'' in the sense that its 
pullback $\varphi^*:\mathcal O_\mathcal M\rightarrow\tilde\varphi_*
(\mathcal O_\mathcal M)$ is a parity-preserving morphism.
Therefore, we can (at most) expect this set of automorphisms of $\mathcal M$ to
carry the structure of a classical Lie group if we require its action on 
$\mathcal M$ to be smooth or holomorphic. This way we will not receive a Lie 
supergroup of positive odd dimension.
We will prove that the topological group 
$\Aut_{\bar 0}(\mathcal M)$, endowed with an analogue of the compact-open 
topology, carries the structure of a complex Lie group such that the action on 
$\mathcal M$ is holomorphic and its Lie algebra is the Lie algebra of even 
holomorphic super vector fields on $\mathcal M$.
It should be noted that the group $\Aut_{\bar 0}(\mathcal M)$ is in general
different from the group $\Aut(M)$ of automorphisms of the underlying 
manifold~$M$. There is a group homomorphism $\Aut_{\bar 0}(\mathcal M)
\rightarrow \Aut(M)$ given by assigning the underlying map to an automorphism of
the supermanifold; this group homomorphism is in general neither injective nor
surjective.

We will find the automorphism group of a compact complex supermanifold~
$\mathcal M$
to be a complex Lie supergroup which acts holomorphically on $\mathcal M$ and 
satisfies a universal property. In analogy to the classical case, its Lie 
superalgebra is the Lie superalgebra of holomorphic super vector fields 
on $\mathcal M$, and the underlying Lie group is 
$\Aut_{\bar 0}(\mathcal M)$, the group of automorphisms of $\mathcal M$.
Using the equivalence of complex Harish-Chandra pairs and complex Lie 
supergroups (see \cite{Vishnyakova}), we construct the 
appropriate automorphism Lie supergroup of $\mathcal M$.

More precisely, the outline of this article is the following:
First, we introduce a topology on the set $\Aut_{\bar 0}(\mathcal M)$ of 
automorphisms on a compact complex supermanifold $\mathcal M$
(c.f. Section~$\S 3$). This topology is analogue of the compact-open topology
in the classical case, which coincides in the case of a compact complex manifold
with the topology of uniform convergence. We prove that the topological space
$\Aut_{\bar 0}(\mathcal M)$ with composition and inversion of automorphisms
as group operations is a locally compact topological group which satisfies 
the second axiom of countability.

In Section~$\S 4$, the non-existence of small subgroups of 
$\Aut_{\bar 0}(\mathcal M)$ is proven, which means that there exists a
neighbourhood of the identity in $\Aut_{\bar 0}(\mathcal M)$ with the property
that this neighbourhood does not contain any non-trivial subgroup.
A result on the existence of Lie group structures on locally compact 
topological groups without small subgroups (see \cite{Yamabe}) then implies that 
$\Aut_{\bar 0}(\mathcal M)$ carries the structure of real Lie group.

Then, continuous one-parameter subgroups of $\Aut_{\bar 0}(\mathcal M)$
and their action on the supermanifold~$\mathcal M$ are studied (see Section~
$\S 5$). This is done in order to obtain results on the regularity of
the $\Aut_{\bar 0}(\mathcal M)$-action on $\mathcal M$ and characterize
the Lie algebra of $\Aut_{\bar 0}(\mathcal M)$.
We prove that the action of each continuous one-parameter subgroup of 
$\Aut_{\bar 0}(\mathcal M)$ on $\mathcal M$ is analytic.
As a corollary we get that the Lie algebra of $\Aut_{\bar 0}(\mathcal M)$ is
isomorphic to the Lie algebra $\mathrm{Vec}_{\bar 0}(\mathcal M)$ of 
even holomorphic super vector fields on $\mathcal M$, and
$\Aut_{\bar 0}(\mathcal M)$ carries the structure of a complex Lie group
so that its natural action on $\mathcal M$ is holomorphic.

Next, we show that the Lie superalgebra $\mathrm{Vec}(\mathcal M)$ of 
holomorphic super vector fields on a compact complex supermanifold $\mathcal M$
is finite-dimensional (see Section~$\S 6$). Since $\Aut_{\bar 0}(\mathcal M)$
carries the structure of a complex Lie group, we already know that 
$\mathrm{Vec}_{\bar 0}(\mathcal M)$, the even part of 
$\mathrm{Vec}(\mathcal M)$, is finite-dimensional.
The key point in the proof in the case of a split supermanifold~$\mathcal M$ 
is that the tangent sheaf of $\mathcal M$ is a coherent sheaf of 
$\mathcal O_M$-modules on the compact complex manifold $M$ with
$\mathcal O_M$ as the sheaf of holomorphic functions on $M$.

Let $\alpha$ denote the action of $\Aut_{\bar 0}(\mathcal M)$ on the Lie
superalgebra $\mathrm{Vec}(\mathcal M)$ by conjugation:
$\alpha(\varphi)(X)=\varphi_*(X)=(\varphi^{-1})^*\circ X\circ\varphi^*$ for 
$\varphi\in\Aut_{\bar 0}(\mathcal M)$, $X\in \mathrm{Vec}(\mathcal M)$.
The restriction of this representation~$\alpha$ to $\mathrm{Vec}_{\bar 0}
(\mathcal M)$, the even part of the Lie superalgebra $\mathrm{Vec}(\mathcal M)$,
coincides with the adjoint action of the Lie group $\Aut_{\bar 0}(\mathcal M)$
on its Lie algebra, which is isomorphic to $\mathrm{Vec}_{\bar 0}(\mathcal M)$.
Hence $\alpha$ defines a Harish-Chandra pair
$(\Aut_{\bar 0}(\mathcal M), \mathrm{Vec}(\mathcal M))$. 
The equivalence between Harish-Chandra pairs and complex Lie supergroups
allows us to define the automorphism Lie supergroup of a compact complex
supermanifold as follows (see Definition~\ref{defi: automorphism group}):

\begin{defi*}[Automorphism Lie supergroup]
 Define the automorphism group $\Aut(\mathcal M)$ of a compact
 complex supermanifold to be the unique complex Lie supergroup associated
 to the Harish-Chandra pair 
 $(\Aut_{\bar 0}(\mathcal M), \mathrm{Vec}(\mathcal M))$ with representation
 $\alpha$.
\end{defi*}

The natural action of the automorphism Lie supergroup $\Aut(\mathcal M)$
on $\mathcal M$ is holomorphic, i.e. we have a morphism 
$\Psi:\Aut(\mathcal M)\times\mathcal M\rightarrow\mathcal M$ of complex 
supermanifolds.
The automorphism Lie supergroup $\Aut(\mathcal M)$ satisfies the following
universal property (see Theorem~\ref{thm: universal property}):

\begin{thm*}
 If $\mathcal G$ is a complex Lie supergroup with a holomorphic action
 $\Psi_{\mathcal G}:\mathcal G\times \mathcal M\rightarrow \mathcal M$ 
 on $\mathcal M$, then there is a unique morphism $\sigma:\mathcal G\rightarrow 
 \Aut(\mathcal M)$ of Lie supergroups such that the diagram
$$\xymatrix{\mathcal G\times\mathcal M
\ar[dr]_-{\sigma\times \mathrm{id}_\mathcal M }
 \ar[rr]^{\Psi_\mathcal G}&&\mathcal M\\
 &\Aut(\mathcal M)\times \mathcal M \ar[ru]_-{\Psi}&&
}$$
is commutative.
\end{thm*}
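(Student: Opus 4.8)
The plan is to work entirely through the equivalence between complex Lie supergroups and complex Harish-Chandra pairs (see \cite{Vishnyakova}). Since $\Aut(\mathcal M)$ is \emph{by definition} the Lie supergroup attached to the pair $(\Aut_{\bar 0}(\mathcal M),\mathrm{Vec}(\mathcal M))$ with representation $\alpha$, producing a morphism $\sigma:\mathcal G\to\Aut(\mathcal M)$ of Lie supergroups is the same as producing a morphism of the associated Harish-Chandra pairs. Writing $(G,\mathfrak g)$ for the Harish-Chandra pair of $\mathcal G$, so that $G$ is its underlying Lie group and $\mathfrak g=\mathrm{Lie}(\mathcal G)$ with $\mathfrak g_{\bar 0}=\mathrm{Lie}(G)$, it therefore suffices to construct a Lie group homomorphism $\sigma_{\bar 0}:G\to\Aut_{\bar 0}(\mathcal M)$ and a homomorphism of Lie superalgebras $\psi:\mathfrak g\to\mathrm{Vec}(\mathcal M)$ that are compatible in the Harish-Chandra sense.

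First I would build $\sigma_{\bar 0}$ from the underlying group action. Restricting the holomorphic action $\Psi_{\mathcal G}$ to the underlying Lie group $G\hookrightarrow\mathcal G$ assigns to each $g\in G$ an automorphism $\sigma_{\bar 0}(g)\in\Aut_{\bar 0}(\mathcal M)$; the action property of $\Psi_{\mathcal G}$ makes $\sigma_{\bar 0}$ a group homomorphism, and the holomorphic dependence on $g$ translates into continuity with respect to the topology on $\Aut_{\bar 0}(\mathcal M)$ introduced in \S 3. Because $\Aut_{\bar 0}(\mathcal M)$ is a complex Lie group acting holomorphically (the Corollary of \S 5), $\sigma_{\bar 0}$ is then automatically holomorphic. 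Next I would take $\psi$ to be the infinitesimal action: differentiating $\Psi_{\mathcal G}$ yields a homomorphism of Lie superalgebras $\mathfrak g\to\mathrm{Vec}(\mathcal M)$ (after the usual sign bookkeeping relating left actions and fundamental vector fields), which automatically lands in all of the finite-dimensional $\mathrm{Vec}(\mathcal M)$.

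It then remains to verify the two Harish-Chandra compatibility conditions. The differential of $\sigma_{\bar 0}$ must equal the even part $\psi|_{\mathfrak g_{\bar 0}}$ under the identification $\mathrm{Lie}(\Aut_{\bar 0}(\mathcal M))\cong\mathrm{Vec}_{\bar 0}(\mathcal M)$ of \S 5; both are the even infinitesimal action of $\Psi_{\mathcal G}$, so they coincide. Second, $\psi$ must be equivariant, i.e.\ $\psi\circ\mathrm{Ad}(g)=\alpha(\sigma_{\bar 0}(g))\circ\psi$ for all $g\in G$. This is the standard naturality of the infinitesimal action: conjugating the fundamental field $\psi(X)$ by the automorphism $\sigma_{\bar 0}(g)$ produces the fundamental field of $\mathrm{Ad}(g)X$, which is precisely the condition above with $\alpha$ as the representation of $\Aut_{\bar 0}(\mathcal M)$ on $\mathrm{Vec}(\mathcal M)$. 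With both conditions checked, the equivalence of \cite{Vishnyakova} returns a morphism $\sigma:\mathcal G\to\Aut(\mathcal M)$ of complex Lie supergroups inducing $(\sigma_{\bar 0},\psi)$.

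Finally I would establish commutativity of the diagram and uniqueness. Under the same equivalence, the holomorphic action $\Psi$ of $\Aut(\mathcal M)$ is reconstructed from the tautological action of $\Aut_{\bar 0}(\mathcal M)$ on $\mathcal M$ together with the tautological infinitesimal action of $\mathrm{Vec}(\mathcal M)$; pulling these back along $(\sigma_{\bar 0},\psi)$ recovers exactly the group action and infinitesimal action defining $\Psi_{\mathcal G}$, so that $\Psi\circ(\sigma\times\mathrm{id}_{\mathcal M})=\Psi_{\mathcal G}$. For uniqueness, any $\sigma$ making the diagram commute must restrict on underlying groups to $\sigma_{\bar 0}$ and differentiate to $\psi$, since both are forced by $\Psi_{\mathcal G}$; as a morphism of Lie supergroups is determined by its induced morphism of Harish-Chandra pairs, $\sigma$ is unique. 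I expect the main obstacle to be the careful interface with Vishnyakova's equivalence \emph{at the level of actions} --- namely verifying that a holomorphic action corresponds precisely to a compatible pair of a group action and an infinitesimal superalgebra action, and that morphisms of such data correspond to equivariant morphisms of actions --- together with the continuity of $\sigma_{\bar 0}$ in the topology of \S 3.
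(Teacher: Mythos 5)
Your proposal follows essentially the same route as the paper: both construct the underlying group homomorphism $g\mapsto\Psi_{\mathcal G}(g)$ into $\Aut_{\bar 0}(\mathcal M)$, take the infinitesimal action $d\Psi_{\mathcal G}:\mathfrak g\rightarrow\mathrm{Vec}(\mathcal M)$ as the superalgebra component, verify the equivariance condition relating $\alpha_{\mathcal G}$ and $\alpha$, and invoke the equivalence of Harish-Chandra pairs and complex Lie supergroups to obtain $\sigma$, with uniqueness forced by the underlying map and the derivative. The argument is correct and matches the paper's proof in structure and substance.
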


The automorphism Lie supergroup of a compact complex supermanifold is the
unique complex Lie supergroup satisfying the preceding universal property.

In the classical case, another class of complex manifolds where the 
automorphism group carries the structure of a Lie group is given by the
bounded domains in $\mathbb C^m$ (see \cite{CartanOeuvres}).
An analogue statement is false in the case of supermanifolds.
In Section~$\S 8$, we give an example showing that in the case of a complex
supermanifold~$\mathcal M$ whose underlying manifold is a bounded domain in 
$\mathbb C^m$ there does in general not exist a Lie supergroup acting on 
$\mathcal M$ and satisfying the universal property of the preceding theorem.

In Section $\S 9$, the automorphism group $\Aut(\mathcal M)$ or
its underlying Lie group $\Aut_{\bar 0}(\mathcal M)$ are calculated 
for some supermanifolds $\mathcal M$ with underlying manifold 
$M=\mathbb P_1\mathbb C$.

\section{Preliminaries and Notation}
Throughout, we work with the ``Berezin-Le{\u\i}tes-Kostant-approach'' to
supermanifolds (c.f. \cite{Berezin}, \cite{Leites}, and \cite{Kostant}).
If a supermanifold is denoted by a calligraphic letter $\mathcal M$,
then we denote the underlying manifold by the corresponding uppercase standard
letter $M$, and the structure sheaf by $\mathcal O_\mathcal M$.
We call a supermanifold~$\mathcal M$ compact if its underlying manifold 
$M$ is compact.
By a complex supermanifold we mean a supermanifold $\mathcal M$ with 
structure sheaf~$\mathcal O_\mathcal M$ which is locally, on small enough
open subsets $U\subset M$, isomorphic to
$\mathcal O_U\otimes \bigwedge \mathbb C^n$, where
$\mathcal O_U$ denotes the sheaf of holomorphic functions on $U$.
For a morphism $\varphi:\mathcal M\rightarrow\mathcal N$ between supermanifolds
$\mathcal M$ and $\mathcal N$, the underlying map $M\rightarrow N$ is denoted by 
$\tilde\varphi$ and its pullback by $\varphi^*:\mathcal O_\mathcal N
\rightarrow \tilde\varphi_*\mathcal O_\mathcal M$. 
An automorphism of a complex supermanifold $\mathcal M$ is a biholomorphic
morphism $\mathcal M\rightarrow \mathcal M$.

For a complex supermanifold $\mathcal M$, let $\mathcal T_\mathcal M$ denote 
the tangent sheaf of $\mathcal M$. The Lie superalgebra of holomorphic 
vector fields on $\mathcal M$ is 
$\mathrm{Vec}(\mathcal M)=\mathcal T_\mathcal M(M)$,
it consists of the subspace $\mathrm{Vec}_{\bar 0}(\mathcal M)$ of
even and the subspace $\mathrm{Vec}_{\bar 1}(\mathcal M)$ of 
odd super vector fields on $\mathcal M$.

 Let $\mathcal M$ be a complex supermanifold of dimension $(m|n)$, and let 
 $\mathcal I_\mathcal M$ be the subsheaf of ideals generated by the odd elements 
 in the structure sheaf $\mathcal O_\mathcal M$ of a supermanifold $\mathcal M$.
 As described in \cite{Onishchik}, we have the filtration
 $$\mathcal O_\mathcal M=(\mathcal I_\mathcal M)^0\supset 
 (\mathcal I_\mathcal M)^1\supset(\mathcal I_\mathcal M)^2\supset \ldots
 \supset (\mathcal I_\mathcal M)^{n+1}=0$$
 of the structure sheaf $\mathcal O_\mathcal M$ by the powers of 
 $\mathcal I_\mathcal M$.
 Define the quotient sheaves 
 $\text{gr}_k(\mathcal O_\mathcal M)=(\mathcal I_\mathcal M)^k/
 (\mathcal I_\mathcal M)^{k+1}$.
 This gives rise to the $\mathbb Z$-graded sheaf 
 $\text{gr}\,\mathcal O_\mathcal M
 ={\textstyle \bigoplus_k} \text{gr}_k(\mathcal O_\mathcal M)$. 
 Further $\text{gr}\,\mathcal M= (M,\text{gr}\,\mathcal O_\mathcal M)$ is a
 split complex supermanifold of the same dimension as $\mathcal M$. 
 
 \smallskip\noindent
 Note that $E:=\text{gr}_1(\mathcal O_\mathcal M)$ defines a vector bundle on 
 $M$. An automorphism $\varphi$ of $\mathcal M$ yields a pullback $\varphi^\ast$ 
 on $\mathcal O_\mathcal M$. Following \cite{Green}, its reduction to the
 $\mathcal O_M$-module $E$ yields a morphism of vector bundles 
 $\varphi_0\in Aut(E)$ over the reduction $\tilde\varphi\in Aut(M)$. 
 Note that by \cite{Morimito}, $Aut(E)$ is a complex Lie group. On local 
 coordinate domains $U,V$ with $\varphi(U)\subset V$ we can identify 
 $\mathcal O_\mathcal M|_V\cong \Gamma_{\Lambda E}|_V$ and 
 $\mathcal O_\mathcal M|_U\cong \Gamma_{\Lambda E}|_U$ and following 
 \cite{Rothstein} decompose  $\varphi^\ast=\varphi_0^\ast\exp(Y)$ with 
 $\mathbb Z$-degree preserving automorphism 
 $\varphi_0^\ast:\Gamma_{\Lambda E}|_V \to \Gamma_{\Lambda E}|_U$ 
 induced by $\varphi_0$. Here $Y$ is an even superderivation on 
 $\Gamma_{\Lambda E}|_V$ increasing the $\mathbb Z$-degree by $2$ or more. 
 Note that the exponential series is finite since $Y$ is nilpotent.

\section{The topology on the group of automorphisms}
Let $\mathcal M$ be a compact complex supermanifold.
An automorphism of $\mathcal M$ is a biholomorphic morphism 
$\varphi:\mathcal M\rightarrow\mathcal M$.
Denote by $\Aut_{\bar 0}(\mathcal M)$ the set of automorphisms of $\mathcal M$.

In this section, a topology on $\Aut_{\bar 0}(\mathcal M)$ is introduced, 
which  generalizes the compact-open topology and topology of compact 
convergence of the classical case. Then we show that 
$\Aut_{\bar 0}(\mathcal M)$ is a locally compact topological group with respect
to this topology. 

Let $K\subseteq M$ be a compact subset such that there are local odd coordinates 
$\theta_1,\ldots, \theta_n$ for $\mathcal M$
on an open neighbourhood of $K$.
Moreover, let $U\subseteq M$ be open and $f\in \mathcal O_\mathcal M(U)$,
and let $U_{\nu}$ be open subsets of $\mathbb C$ for $\nu\in (\mathbb Z_2)^n$.
Let $\varphi:\mathcal M\rightarrow\mathcal M$ be an automorphism with
$\tilde\varphi(K)\subseteq U$.
Then there are holomorphic functions $\varphi_{f,\nu}$ on
a neighbourhood of $K$ such that
$$\varphi^*(f)=\sum_{\nu\in (\mathbb Z_2)^n}\varphi_{f,\nu} \theta^\nu.$$
Let
$$\Delta(K, U, f,\theta_j, U_{\nu})
=\{\varphi\in \Aut_{\bar 0}(\mathcal M)|\,\tilde{\varphi}(K)\subseteq U,
\, \varphi_{f,\nu}(K)\subseteq U_{\nu}\},$$
and endow $\Aut_{\bar 0}(\mathcal M)$ with the topology generated by sets of 
this form, i.e. the sets of the form $\Delta(K, U,f,\theta_j, U_{\nu})$ form a 
subbase of the topology.

\begin{rmk}
In particular, the subsets of the form
$\Delta(K,U)=\{\varphi\in \Aut_{\bar 0}(\mathcal M)
|\,\tilde{\varphi}(K)\subseteq U\}$
are open for $K\subseteq M$ compact and $U\subseteq M$ open.
Hence the map $\Aut_{\bar 0}(\mathcal M)\rightarrow \Aut(M)$,
associating to an automorphism $\varphi$ of $\mathcal M$ the underlying
automorphism~$\tilde{\varphi}$ of $M$, is continuous.
\end{rmk}

\begin{rmk}
 The group $\Aut_{\bar 0}(\mathcal M)$ endowed with the above topology is a 
 second-countable Hausdorff space since $M$ is second-countable.
\end{rmk}

 Let $U\subseteq M$ be open. Then we can define a topology on 
 $\mathcal O_\mathcal M(U)$ as follows:
 If $K\subseteq U$ is compact such that there exist odd coordinates 
 $\theta_1,\ldots, \theta_n$ on a neighbourhood of $K$, write
 $f\in \mathcal O_\mathcal M(U)$ on $K$ as $f=\sum_{\nu}f_\nu \theta^\nu$.
 Let $U_\nu\subseteq \mathbb C$ be open subsets.
 Then define a topology on $\mathcal O_\mathcal M(U)$ by requiring that the sets
 of the form $\{f\in \mathcal O_\mathcal M(U)|\,f_\nu(K)\subseteq U_\nu\}$ are
 a subbase of the topology.
 A sequence of functions $f_k$ converges to $f$ if and only if in all local 
 coordinate domains with odd coordinates $\theta_1,\ldots,\theta_n$ and  
 $f_k=\sum_{\nu}f_{k,\nu}\theta^\nu$, $f=\sum_{\nu}f_\nu\theta^\nu$, the 
 coefficient functions $f_{k,\nu}$ converge uniformly to $f_\nu$ on compact
 subsets.
 Note that for any open subsets $U_1, U_2\subseteq M$ with $U_1\subset U_2$ the
 restriction map 
 $\mathcal O_\mathcal M(U_2)\rightarrow \mathcal O_\mathcal M(U_1)$, 
 $f\mapsto f|_{U_1}$, is continuous.

Using Taylor expansion (in local coordinates) of automorphisms of $\mathcal M$
we can deduce the following lemma:

\begin{lemma}
 A sequence of automorphisms $\varphi_k:\mathcal M\rightarrow \mathcal M$
 converges to an automorphism $\varphi:\mathcal M\rightarrow \mathcal M$ with 
 respect to the topology of $\Aut_{\bar 0}(\mathcal M)$ if and only if the 
 following condition is satisfied:
 For all $U,V\subseteq M$  open subsets of $M$ such that $V$ contains the
 closure of $\tilde{\varphi}(U)$, there is an $N\in \mathbb N$ such that 
 $\tilde{\varphi_k}(U)\subseteq V$ for all $k\geq N$. 
 Furthermore, for any $f\in \mathcal O_\mathcal M(V)$ the sequence
 $(\varphi_k)^*(f)$ converges to $\varphi^*(f)$ on $U$ in the topology of
 $\mathcal O_\mathcal M(U)$.
\end{lemma}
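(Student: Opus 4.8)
The plan is to prove both implications by directly comparing the subbasic neighbourhoods of $\varphi$ with the coefficient-function descriptions of the two topologies involved. Throughout I would use two facts recorded just above: that $\Delta(K,U)$ is open for every compact $K$ and open $U$, and that convergence in $\mathcal O_\mathcal M(U)$ means precisely that the coefficient functions converge uniformly on every compact subset of $U$, in any fixed local odd coordinates.

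First I would settle the underlying-map statement. Since $M$ is compact, every open $A\subseteq M$ has compact closure with $\tilde\varphi(\overline A)=\overline{\tilde\varphi(A)}$, because $\tilde\varphi$ is a homeomorphism. For the forward direction, given $A,B$ open with $\overline{\tilde\varphi(A)}\subseteq B$ one has $\varphi\in\Delta(\overline A,B)$; this set is open, so $\varphi_k\in\Delta(\overline A,B)$ for large $k$, i.e. $\tilde\varphi_k(A)\subseteq\tilde\varphi_k(\overline A)\subseteq B$. For the converse, given a subbasic set $\Delta(K,U,f,\theta_j,U_\nu)\ni\varphi$, I would choose an open $A$ with $K\subseteq A\subseteq\overline A\subseteq\tilde\varphi^{-1}(U)$ and $\overline A$ compact; then $\overline{\tilde\varphi(A)}\subseteq U$, so condition~(1) forces $\tilde\varphi_k(K)\subseteq U$ for large $k$, which is exactly the underlying-map requirement of the subbasic set.

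The heart of the matter is relating the coefficient conditions. The subbasic sets only record that the images $\varphi_{f,\nu}(K)$ land in prescribed open sets, whereas conditions~(1)--(2) speak of uniform convergence, and the real work is passing between these. For the converse direction this passage is easy: if $\varphi_{f,\nu}(K)\subseteq U_\nu$ with $K$ compact and $U_\nu$ open, then $\varphi_{f,\nu}(K)$ has positive distance $\delta$ to the complement of $U_\nu$, so once condition~(2) delivers $\|\varphi_{k,f,\nu}-\varphi_{f,\nu}\|_K<\delta$ (uniform convergence of coefficients on the compact $K$, applied with $A$ as above and $g=f$), we obtain $\varphi_{k,f,\nu}(K)\subseteq U_\nu$; intersecting the resulting finitely many conditions (one per $\nu$, together with the underlying-map part above) places $\varphi_k$ in the given subbasic set for large $k$, which is what convergence to $\varphi$ requires.

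For the forward direction the upgrade from ``images in open sets'' to uniform convergence is the one genuinely delicate step, and it is where a localisation argument is needed: mere membership in sets of the form $\{\varphi_{f,\nu}(K)\subseteq U_\nu\}$ does not by itself force $\varphi_{k,f,\nu}\to\varphi_{f,\nu}$ uniformly unless the target is shrunk near each point of $K$. Concretely, fixing $f\in\mathcal O_\mathcal M(V)$ with $V\supseteq\overline{\tilde\varphi(U)}$, a compact $K\subseteq U$ carrying odd coordinates, an index $\nu$ and $\varepsilon>0$, I would use uniform continuity of $\varphi_{f,\nu}$ on $K$ to cover $K$ by finitely many compacta $K_1,\dots,K_m$ on each of which $\varphi_{f,\nu}$ oscillates by less than $\varepsilon$; choosing $p_i\in K_i$ and $U_{i,\nu}=B(\varphi_{f,\nu}(p_i),2\varepsilon)$, each $\Delta(K_i,V,f,\theta_j,U_{i,\nu})$ contains $\varphi$, so $\varphi_k$ lies in all of them for large $k$, giving $|\varphi_{k,f,\nu}(x)-\varphi_{f,\nu}(x)|<3\varepsilon$ for every $x\in K_i$ and hence, over the finite cover, uniform convergence on $K$. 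Together with the underlying-map statement this yields condition~(2). I expect this localisation --- trading one ``global'' image condition for finitely many ``local'' ones via uniform continuity on the compact $K$ --- to be the only substantial obstacle; the domain-matching issue, namely that $\varphi_k^*(f)$ is defined near $K$ only once $\tilde\varphi_k(K)$ lands in the domain of $f$, is handled automatically by the already-proven underlying-map part, and a local Taylor expansion of the pullbacks may be invoked to compare them on their slightly differing domains if one prefers to argue through the coordinate functions.
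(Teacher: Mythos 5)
Your argument is correct and complete. Both implications are handled properly: the forward direction for the underlying maps via openness of $\Delta(\overline A,B)$, the converse via a compactly contained open set $A$ with $K\subseteq A\subseteq\overline A\subseteq\tilde\varphi^{-1}(U)$, and the genuinely delicate step --- upgrading the subbasic conditions $\varphi_{f,\nu}(K)\subseteq U_\nu$ to uniform convergence of the coefficient functions on $K$ --- is done by the standard localisation trick (finitely many small compacta $K_i$ on which $\varphi_{f,\nu}$ oscillates by less than $\varepsilon$, with shrunken targets $B(\varphi_{f,\nu}(p_i),2\varepsilon)$), exactly as one proves that the compact-open topology agrees with uniform convergence on compacta. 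The converse passage from uniform convergence back to the subbasic conditions via the positive distance of the compact image to the complement of $U_\nu$ is also right (with the trivial caveat that $U_\nu=\mathbb C$ is vacuous), and you correctly observe that the domain-matching issue for $(\varphi_k)^*(f)$ is resolved by the already-established statement on underlying maps.

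For comparison: the paper gives no proof of this lemma at all; it only precedes the statement with the remark that it can be deduced ``using Taylor expansion (in local coordinates) of automorphisms of $\mathcal M$.'' Your route does not use Taylor expansion and does not need it, since both the subbase on $\Aut_{\bar 0}(\mathcal M)$ and the topology on $\mathcal O_\mathcal M(U)$ already quantify over arbitrary sections $f$, so the equivalence is a purely point-set matter once the coefficient descriptions are lined up. Taylor expansion of pullbacks becomes genuinely necessary only in the subsequent lemma on joint continuity of $(\varphi,f)\mapsto\varphi^*(f)$, where one must control $(\varphi_k)^*(f_l)$ as both arguments vary. Your direct argument is therefore arguably cleaner for this particular statement, and it supplies a proof the paper omits.
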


\begin{lemma}\label{le123}
 If $U, V\subseteq M$ are open subsets, $K\subseteq M$ is compact with
 $V\subseteq K$, then the map 
 $$\Delta(K,U)\times\mathcal O_\mathcal M(U)
 \rightarrow \mathcal O_\mathcal M(V),\, (\varphi,f)\mapsto \varphi^*(f)$$
 is continuous.
\end{lemma}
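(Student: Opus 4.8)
The plan is to prove sequential continuity and then invoke first countability of the spaces involved. The topology on $\mathcal{O}_\mathcal{M}(U)$ is generated by conditions on the coefficient functions in local odd coordinates, and by the remark preceding this lemma it is exactly the topology of uniform convergence on compact subsets of these coefficient functions; since $M$ is second countable it is covered by countably many coordinate patches, so this topology is metrizable, as is the subspace topology on $\Delta(K,U)$. Hence the product is first countable and it suffices to show that whenever $\varphi_k\to\varphi$ in $\Delta(K,U)$ and $f_k\to f$ in $\mathcal{O}_\mathcal{M}(U)$, we have $\varphi_k^*(f_k)\to\varphi^*(f)$ in $\mathcal{O}_\mathcal{M}(V)$. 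By definition of the target topology this is checked on each compact $L\subseteq V$ carrying odd coordinates, by showing uniform convergence on $L$ of the coefficient functions of $\varphi_k^*(f_k)$ towards those of $\varphi^*(f)$.

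Next I would localize the computation. Fix such a compact $L\subseteq V\subseteq K$. Since $\varphi\in\Delta(K,U)$ gives $\tilde\varphi(K)\subseteq U$, the image $\tilde\varphi(L)$ is a compact subset of $U$; I choose finitely many coordinate charts on the source covering $L$ and on the target covering a compact neighbourhood of $\tilde\varphi(L)$ inside $U$. In such charts, writing $f=\sum_\nu f_\nu\theta^\nu$, the pullback is computed by substituting the component functions $\varphi^*(x_i)$, $\varphi^*(\theta_j)$ into the expansion of $f$ and expanding the even nilpotent corrections in a finite Taylor series. Each coefficient function of $\varphi^*(f)$ then becomes a finite sum of products of the form $(\partial^\mu f_\nu\circ\tilde\varphi)\cdot(\text{coefficient functions of }\varphi^*)$, and likewise for $\varphi_k^*(f_k)$.

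I would then assemble the convergence factor by factor. From $\varphi_k\to\varphi$ and the characterization of convergence in $\Aut_{\bar 0}(\mathcal M)$, the coefficient functions of $\varphi_k^*$ converge uniformly on $L$ to those of $\varphi^*$, and $\tilde\varphi_k\to\tilde\varphi$ uniformly on $L$; from $f_k\to f$, the coefficients $f_{k,\nu}\to f_\nu$ uniformly on compact subsets of $U$, and by the Cauchy estimates all derivatives $\partial^\mu f_{k,\nu}\to\partial^\mu f_\nu$ uniformly on compact subsets. Since uniform limits on compacts are preserved under finite sums, finite products, and composition (the latter provided the inner maps have ranges in a fixed compact on which the outer sequence converges uniformly), each coefficient function of $\varphi_k^*(f_k)$ converges uniformly on $L$ to the corresponding coefficient of $\varphi^*(f)$, which is the desired conclusion.

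The main obstacle will be the control of domains in the composition step: I must ensure that for all large $k$ the images $\tilde\varphi_k(L)$ remain inside the fixed compact neighbourhood of $\tilde\varphi(L)$ in $U$, so that $f_{k,\nu}$ and their derivatives are defined there and converge uniformly, and that the finite Taylor expansion of the nilpotent even corrections is handled uniformly in $k$. The first point follows from $\tilde\varphi_k\to\tilde\varphi$ uniformly on the compact $L$ together with $\tilde\varphi(L)\subseteq U$; the second from finiteness of the odd dimension, so that only finitely many derivatives occur and the expansion terminates, the estimates then being uniform via Cauchy's inequalities. This reduces everything to the classical fact that composition, multiplication, and differentiation are continuous for the topology of compact convergence of holomorphic functions.
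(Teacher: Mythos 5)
Your proposal is correct and follows essentially the same route as the paper's (much more terse) proof: reduce to sequential continuity, compute the pullback in local coordinates via the finite Taylor expansion in the nilpotent even corrections, and conclude from the fact that derivatives of uniformly convergent sequences of holomorphic functions converge uniformly. You simply supply the details (domain control, first countability) that the paper leaves implicit.
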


\begin{proof}
 Let $\varphi_k\in \Delta(K,U)$ be a sequence of automorphisms of $\mathcal M$
 converging to $\varphi\in \Delta(K,U)$, and $f_l\in \mathcal O_\mathcal M(U)$
 a sequence converging to $f\in \mathcal O_\mathcal M(U)$. 
 Choosing appropriate local coordinates and using Taylor expansion of
 the pullbacks $(\varphi_k)^*(f_l)$, it can be shown that
 $(\varphi_k)^*(f_l)$ converges to $\varphi^*(f)$ as $k,l\to\infty$.
 This uses that the derivatives of a sequence of uniformly converging 
 holomorphic functions also uniformly converge.
\end{proof}

\begin{lemma}\label{lemma: locally compact}
 The topological space $\Aut_{\bar 0}(\mathcal M)$ is locally compact.
\end{lemma}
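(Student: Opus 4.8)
The plan is to establish sequential local compactness: since $\Aut_{\bar 0}(\mathcal M)$ is metrizable (it is second countable and induced by compact convergence), it suffices to produce, around an arbitrary $\varphi_0\in\Aut_{\bar 0}(\mathcal M)$, a neighbourhood $\mathcal V$ whose closure is sequentially compact. Because $M$ is compact, I would first fix a finite cover of $M$ by coordinate charts $U_i$ carrying even coordinates $z^{(i)}$ and odd coordinates $\theta^{(i)}$, together with compact sets $K_i\subseteq U_i$ with $\bigcup_i K_i=M$. An automorphism is then completely encoded by its underlying map $\tilde\varphi$ together with the finitely many tuples of holomorphic coefficient functions $\varphi_{f,\nu}$ appearing in the local expansions of $\varphi^*(z^{(j)}_a)$ and $\varphi^*(\theta^{(j)}_b)$ on those $K_i$ with $\tilde\varphi(K_i)\subseteq U_j$.

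For the neighbourhood I would take $\mathcal V=\bigcap_i \Delta(K_i,U_{j(i)},f,\theta_j,B)$, where each $U_{j(i)}$ is a relatively compact chart containing $\tilde\varphi_0(K_i)$, the function $f$ runs through the coordinate functions, and $B\subseteq\mathbb C$ is a bounded set containing the relevant values of $\varphi_0$. Given a sequence $\varphi_k\in\mathcal V$, the underlying maps send each $K_i$ into the fixed relatively compact set $U_{j(i)}$, and all coefficient functions are uniformly bounded on the $K_i$. Montel's theorem then yields, after passing to a subsequence (diagonally over the finitely many charts), locally uniform convergence of the underlying maps to a holomorphic map $\tilde\varphi\colon M\to M$ and of every coefficient function. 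By the convergence criterion stated above and Lemma~\ref{le123}, this limiting data assembles into a well-defined endomorphism $\varphi$ of $\mathcal M$: the patching relations expressing compatibility of the local data across chart overlaps are equalities of holomorphic functions, hence are preserved under the limit.

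The main obstacle is to guarantee that this limit $\varphi$ is again an automorphism, rather than a merely holomorphic and possibly non-invertible endomorphism. Here I would exploit the decomposition recalled in Section~$\S 2$: the reduction $\varphi\mapsto\tilde\varphi$ lands in the complex Lie group $\Aut(M)$ (Bochner--Montgomery), the reduction to $E=\mathrm{gr}_1(\mathcal O_\mathcal M)$ lands in the complex Lie group $\Aut(E)$ (Morimoto), and the remaining datum is the exponential $\exp(Y)$ of a nilpotent even superderivation $Y$ raising the $\mathbb Z$-degree by at least $2$. Shrinking $\mathcal V$, I would arrange that the $\tilde\varphi_k$ stay in a relatively compact neighbourhood of $\tilde\varphi_0$ in $\Aut(M)$, that their reductions stay in a relatively compact neighbourhood in $\Aut(E)$, and that the $Y_k$ stay bounded in the finite-dimensional space of degree-raising superderivations. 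Local compactness of $\Aut(M)$ and $\Aut(E)$ then forces the inverses $\tilde\varphi_k^{-1}$ and the reductions of $\varphi_k^{-1}$ to subconverge inside these groups, while $\exp(-Y_k)$ subconverges by boundedness of $Y_k$.

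Passing to a common subsequence, the inverses $\varphi_k^{-1}$ therefore converge to an endomorphism $\psi$ of $\mathcal M$. Continuity of composition, obtained from Lemma~\ref{le123} applied in each variable, gives $\varphi\circ\psi=\lim_k \varphi_k\circ\varphi_k^{-1}=\mathrm{id}_\mathcal M$ and likewise $\psi\circ\varphi=\mathrm{id}_\mathcal M$, so $\varphi$ is an automorphism with inverse $\psi$ and $\varphi_k\to\varphi$ in $\mathcal V$. This shows that the closure of $\mathcal V$ is sequentially compact, which by metrizability completes the proof of local compactness.
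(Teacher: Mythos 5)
Your argument is essentially correct, but it reaches the crucial point --- that the Montel limit is again an \emph{automorphism} --- by a genuinely different route than the paper. The paper works entirely by hand: it builds the determinant condition $\det((\varphi_{j,k}))\neq 0$ into the defining sets $\Theta(x_i)$ to force invertibility of the odd Jacobian of the limit, and for the underlying map it shows surjectivity of $\tilde\varphi$ (a point missed by all $\tilde\varphi_k$ would contradict openness of $\Delta(M,M\setminus\{p\})$), then invokes Hurwitz's theorem for local biholomorphy and a proposition of Narasimhan for injectivity. You instead outsource both difficulties to the continuous reduction maps $\Aut_{\bar 0}(\mathcal M)\to\Aut(M)$ and $\Aut_{\bar 0}(\mathcal M)\to\Aut(E)$ together with the known local compactness of the targets (Bochner--Montgomery, Morimoto), and then recover invertibility of the full limit from the local decomposition $\varphi^*=\varphi_0^*\exp(Y)$ and convergence of the inverses. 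This is legitimate and arguably cleaner conceptually --- the paper itself uses exactly this decomposition and the continuity of $\varphi\mapsto\varphi_0$ in its later proof that inversion is continuous --- at the price of importing two classical theorems where the paper's proof is self-contained apart from Hurwitz and Narasimhan. Two small corrections: the space of even superderivations $Y$ raising the $\mathbb Z$-degree by at least $2$ on a chart is \emph{not} finite-dimensional (its coefficients are arbitrary holomorphic functions), so the subconvergence of the $Y_k$ should instead come from Montel applied to their coefficient functions, which are bounded on compacta since $Y_k=\log\bigl((\varphi_{k,0}^*)^{-1}\varphi_k^*\bigr)$ is a finite polynomial expression in already-controlled data; and the decomposition $\varphi^*=\varphi_0^*\exp(Y)$ is only local, so the passage from chart-wise convergence of the $(\varphi_k^{-1})^*$ to convergence of the global morphisms $\varphi_k^{-1}$ deserves a sentence on patching. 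Neither point affects the validity of the approach.
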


\begin{proof}
 Let $\psi\in \Aut_{\bar 0}(\mathcal M)$.
 For each fixed $x\in M$ there are open neighbourhoods $V_x$ and $U_x$ of $x$ 
 and $\tilde{\psi}(x)$ respectively such that $\tilde{\psi}(K_x)\subseteq U_x$ 
 for $K_x:=\overline{V}_x$.
 We may additionally assume  that there are local odd coordinates 
 $\xi_1,\ldots, \xi_n$ for $\mathcal M$ on $U_x$, and $\theta_1,\ldots, \theta_n$
 local odd coordinates on an open neighbourhood of $K_x$.
 For any automorphism $\varphi:\mathcal M\rightarrow \mathcal M$ with
 $\tilde{\varphi}(K_x)\subseteq U_x$, let $\varphi_{j,k}$, $\varphi_{j,\nu}$ 
 (for $||\nu||=||(\nu_1,\ldots,\nu_n)||=\nu_1+\ldots+\nu_n\geq 3$) be local
 holomorphic functions such that 
 $$\varphi^*(\xi_j)=\sum_{k=1}^n \varphi_{j,k} \theta_k
 +\sum_{||\nu||\geq 3} \varphi_{j,\nu}\theta^\nu.$$
 Choose bounded open subsets $U_{j,k}, U_{j,\nu}\subset \mathbb C$, 
 such that $\psi_{j,k}(x)\in U_{j,k}$ and $\psi_{j,\nu}(x)\in U_{j,\nu}$. 
 Since $\psi$ is an automorphism, we have 
 $$\det\left((\psi_{j,k}(y))_{1\leq j,k\leq n}\right)\neq 0$$ 
 for all $y\in K_x$. For later considerations shrink $U_{j,k}$ such that 
 $\det(C)\neq 0$ for all $C=(c_{j,k})_{1\leq j,k\leq n}$ with 
 $c_{j,k}\in U_{j,k}$.
 After shrinking $V_x$ we may assume $\psi_{j,k}(K_x)\subseteq U_{j,k}$ and 
 $\psi_{j,\nu}(K_x)\subseteq U_{j,\nu}$.
 Hence $\psi$ is contained in the set 
 $\Theta(x)=\{\varphi\in \Aut_{\bar 0}(\mathcal M)\,|\,\tilde{\varphi}(K_x)
 \subseteq \overline{U}_x,\,\varphi_{j,k}(K_x)\subseteq \overline{U}_{j,k},
 \varphi_{j,\nu}(K_x) \subseteq \overline{U}_{j,\nu}\}$,
 which contains an open neighbourhood of $\psi$. 
 Since $M$ is compact, $M$ is covered by finitely many of the sets $V_x$, 
 say $V_{x_1},\ldots,V_{x_l}$. 
 Then $\psi$ is contained in $\Theta=\Theta(x_1)\cap\ldots\cap \Theta(x_l)$.
 We will now prove that $\Theta$ is sequentially compact: 
 
 \smallskip\noindent
 Let $\varphi_k$ be any sequence of automorphisms contained in $\Theta$.
 Then, using Montel's theorem and passing to a subsequence, the sequence 
 $\varphi_k$ converges to a morphism $\varphi:\mathcal M\rightarrow \mathcal M$.
 It remains to show that $\varphi$ is an automorphism of $\mathcal M$.
 
 \smallskip\noindent
 The underlying map $\tilde{\varphi}:M\rightarrow M$ is surjective since
 if $p\notin \tilde{\varphi}(M)$, then $\varphi\in\Delta(M,M\setminus \{p\})$ 
 and therefore $\varphi_k\in \Delta(M,M\setminus \{p\})$ for $k$ large enough
 which contradicts the assumption that $\varphi_k$ is an automorphism.
 This also implies that there is an $x\in M$ such that the differential 
 $D\tilde{\varphi}(x)$ is invertible. 
 Using Hurwitz's theorem (see e.g. \cite{Narasimhan}, p. 80) it follows
 $\det(D\tilde{\varphi}(x))\neq 0$ for all $x\in M$. Thus $\tilde{\varphi}$
 is locally biholomorphic. 
 Moreover, $\varphi$ is locally invertible due to the special form of 
 the sets $\Theta(x_i)$.
 
 \smallskip\noindent
 In order check that $\tilde{\varphi}$ is injective, let $p_1,p_2\in M$, 
 $p_1\neq p_2$, such that $q=\tilde{\varphi}(p_1)=\tilde{\varphi}(p_2)$. 
 Let $\Omega_j$, $j=1,2$, be open neighbourhoods of $p_j$ with 
 $\Omega_1\cap \Omega_2= \emptyset$. By \cite{Narasimhan}, p. 79, Proposition~5,
 there exists $k_0$ with the property that $q\in \tilde{\varphi}_k(\Omega_1)$
 and $q\in \tilde{\varphi}_k(\Omega_2)$ for all $k\geq k_0$.
 The bijectivity of the $\varphi_k$'s now yields a contradiction to 
 $\Omega_1\cap\Omega_2=\emptyset$.
\end{proof}

\begin{prop}
 The set $\Aut_{\bar 0}(\mathcal M)$ is a topological group with composition of
 automorphisms as multiplication and inversion of automorphisms as the inverse.
\end{prop}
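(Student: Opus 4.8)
The plan is to verify that composition and inversion of automorphisms are continuous for the given topology. Since $\Aut_{\bar 0}(\mathcal M)$ is second-countable and Hausdorff by the preceding remarks, it is first-countable, so it suffices to check sequential continuity of both operations, which lets me work throughout with the convergence characterisations provided by the two lemmas above.

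For multiplication, suppose $\varphi_k\to\varphi$ and $\psi_k\to\psi$; I want $\varphi_k\circ\psi_k\to\varphi\circ\psi$. Fix open $U,V\subseteq M$ with $V\supseteq\overline{\widetilde{\varphi\circ\psi}(U)}$ and $f\in\mathcal O_\mathcal M(V)$; by the convergence lemma I must show that eventually $\widetilde{\varphi_k\circ\psi_k}(U)\subseteq V$ and that $(\varphi_k\circ\psi_k)^*(f)=\psi_k^*(\varphi_k^*(f))\to\psi^*(\varphi^*(f))$ on $U$. Interpolating an intermediate open $U'$ (with compact closure, as $M$ is compact) so that $\tilde\psi(\overline U)\subseteq U'$ and $\tilde\varphi(\overline{U'})\subseteq V$, the convergence of $\psi_k$ gives $\tilde\psi_k(\overline U)\subseteq U'$ eventually and that of $\varphi_k$ gives $\tilde\varphi_k(U')\subseteq V$ eventually, which together yield the image condition. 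For the pullbacks, the convergence lemma applied to $\varphi_k\to\varphi$ first gives $\varphi_k^*(f)\to\varphi^*(f)$ in $\mathcal O_\mathcal M(U')$; then the joint continuity of pullback (Lemma~\ref{le123}, applied with a suitable compact $K$ containing $\overline U$ on which $\tilde\psi_k$ and $\tilde\psi$ still map into $U'$) applied to the pairs $(\psi_k,\varphi_k^*(f))$ yields $\psi_k^*(\varphi_k^*(f))\to\psi^*(\varphi^*(f))$ on $U$.

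For inversion, suppose $\varphi_k\to\varphi$ and set $\psi_k:=\varphi_k^{-1}$. The key step is to show that the $\psi_k$ eventually lie in a sequentially compact set $\Theta$ of the form constructed in the proof of Lemma~\ref{lemma: locally compact}. The underlying maps converge, $\tilde\psi_k=\tilde\varphi_k^{-1}\to\tilde\varphi^{-1}$, since $\Aut(M)$ is a topological group; and the local coordinate coefficients of $\psi_k$ are obtained from those of $\varphi_k$ by inverting the even and odd Jacobian data, whose determinants stay bounded away from zero exactly as arranged in that proof, so these coefficients remain uniformly bounded on a fixed finite compact cover of $M$. Thus the $\psi_k$ are trapped in such a $\Theta$.

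Finally, from any subsequence of $(\psi_k)$ the sequential compactness of $\Theta$ extracts a further subsequence converging to some $\psi$; applying the multiplication continuity just established to $\varphi_k\circ\psi_k=\mathrm{id}$ along this subsequence forces $\varphi\circ\psi=\mathrm{id}$, so $\psi=\varphi^{-1}$. Since every subsequence has a sub-subsequence converging to the same limit $\varphi^{-1}$, the whole sequence converges, $\psi_k\to\varphi^{-1}$, giving continuity of inversion. I expect the main obstacle to be this inversion step, specifically establishing the uniform boundedness of the coefficient functions of the inverses so as to confine $(\psi_k)$ to a compact set; once that is in place, identifying the limit via continuity of multiplication is routine.
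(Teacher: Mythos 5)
Your treatment of multiplication is essentially the paper's argument: both reduce to sequential continuity, interpolate an intermediate open set, and invoke the joint continuity of the pullback (Lemma~\ref{le123}) applied to $(\psi_k,\varphi_k^*(f))$. For inversion, however, you take a genuinely different route. The paper identifies $\mathcal O_\mathcal M$ locally with $\Gamma_{\Lambda E}$ and decomposes $\varphi^*=\varphi_0^*\exp(Y)$ with $\varphi_0\in\Aut(E)$ and $Y$ a nilpotent even derivation raising the $\mathbb Z$-degree; continuity of inversion is then inherited from Morimoto's theorem that $\Aut(E)$ is a topological (indeed Lie) group, together with the finiteness of $\exp$ and $\log$ on nilpotents, giving $(\varphi_k^{-1})^*=\exp(-Y_k)(\varphi_{k,0}^*)^{-1}\to\exp(-Y)(\varphi_0^*)^{-1}$. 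You instead trap the inverses $\psi_k=\varphi_k^{-1}$ in a sequentially compact set $\Theta$ of the type built in Lemma~\ref{lemma: locally compact} and identify every subsequential limit as $\varphi^{-1}$ via the already-proved continuity of composition; this is a clean and correct scheme (the subsequence principle is valid here, and the limits produced by $\Theta$ are automorphisms, so composition continuity applies). The one step you rightly flag as the crux --- uniform boundedness on compacta of the coordinate coefficients of $\psi_k^*$ --- does hold, but deserves more than the one sentence you give it: solving $\psi_k^*\circ\varphi_k^*=\mathrm{id}$ degree by degree expresses those coefficients as polynomials in the coefficients of $\varphi_k^*$ and their partial derivatives, precomposed with $\tilde\varphi_k^{-1}$, divided by powers of $\det(D\tilde\varphi_k)$ and $\det((\varphi_{k})_{j,l})$; uniform convergence of the data (plus Cauchy estimates for the derivatives) and the lower bounds on the determinants then give the required bounds. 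The trade-off is that the paper's decomposition outsources the analytic work to $\Aut(E)$ at the cost of importing the Green--Rothstein machinery, whereas your argument is self-contained within the topology already set up but leans on an explicit (and somewhat laborious) coordinate computation for the inverse; in fact, pushed slightly further, that computation would yield convergence of the coefficients directly, bypassing the compactness detour altogether.
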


\begin{proof}
 Let $\varphi_k$ and $\psi_l$ be two sequences of automorphisms of $\mathcal M$
 converging to $\varphi$ and $\psi$ respectively. 
 By the classical theory, $\tilde{\varphi_k}\circ\tilde{\psi_l}$ converges to 
 $\tilde{\varphi}\circ\tilde{\psi}$, and $\tilde{\varphi_k}^{-1}$ to 
 $\tilde{\varphi}^{-1}$.
 Let  $U, V, W\subseteq M$ be open subsets with $\tilde{\varphi}(V)\subseteq W$,
 $\tilde{\varphi_k}(V)\subseteq W$, $\tilde{\psi}(U)\subseteq V$, 
 $\tilde{\psi_l}(U)\subseteq V$, 
 for $k$ and $l$ sufficiently large and let $f\in \mathcal O_\mathcal M(W)$.
 Then the sequence $(\varphi_k)^*(f)\in \mathcal O_\mathcal M(V)$ converges to
 $\varphi^*(f)$ on $V$, and by Lemma \ref{le123} 
 $(\varphi_k\circ\psi_l)^*(f)=(\psi_l)^*\left((\varphi_k)^*(f)\right)$
 converges to $\psi^*(\varphi^*(f))=(\varphi\circ\psi)^*(f)$ on $U$ as
 $k,l\to \infty$ , which shows that the multiplication is continuous.
 
 \smallskip \noindent
 Consider now the inversion map 
 $\Aut_{\bar 0}(\mathcal M)\rightarrow\Aut_{\bar 0}(\mathcal M)$, 
 $\varphi\mapsto \varphi^{-1}$. Let  $\varphi_k$ be a sequence in 
 $\Aut_{\bar 0}(\mathcal M)$ converging to 
 $\varphi \in \Aut_{\bar 0}(\mathcal M)$. 
 Note that since the automorphism group $\Aut(M)$ of the underlying manifold $M$
 is a topological group, the inversion map $\Aut(M)\rightarrow \Aut(M)$ is 
 continuous. 
 For any choice of local coordinate charts on $U, V\subseteq M$ such that the 
 closure of $\tilde{\varphi}^{-1}(U)$ is contained in $V$ we can conclude: 
 Since  $\tilde{\varphi}_k^{-1}$ converges to 
 $\tilde{\varphi}^{-1}$, we have $\tilde{\varphi_k}^{-1}(U)\subseteq V$ for $k$
 sufficiently large. 
 Identify  $\mathcal O_\mathcal M(U) \cong \Gamma_{\Lambda E}(U)$, resp. 
 $\mathcal O_\mathcal M(V) \cong \Gamma_{\Lambda E}(V)$ and decompose 
 $\varphi^\ast=\varphi^\ast_0\exp(Y)$, $\varphi_k^\ast=\varphi_{k,0}^\ast
 \exp(Y_k)$ as in Section~2. Note that $\varphi_0^\ast$ is induced by an 
 automorphism $\varphi_0$ of the vector bundle $E$. We can verify by an 
 observation in local coordinates that the map 
 $\Aut_{\bar 0}(\mathcal M)\to \Aut(E)$, $\varphi \mapsto \varphi_0$, is 
 continuous. Hence, the sequence $\varphi_{k,0}$ converges to $\varphi_0$ and 
 $\varphi_{k,0}^\ast$ converges to~$\varphi_{0}^\ast$. 
 By \cite{Morimito} the inversion on  $\Aut(E)$ is continuous. 
 Therefore, $(\varphi_{k,0}^{-1})^\ast$ converges to ~$(\varphi_{0}^{-1})^\ast$.
 Due to the finiteness of the logarithm and exponential series on nilpotent 
 elements, $Y_k$ converges to $Y$. 
 Hence, $(\varphi^{-1}_k)^\ast=\exp({-Y_k})(\varphi^{\ast}_{k,0})^{-1}$ 
 converges to $\exp({-Y})(\varphi^{\ast}_{0})^{-1}=(\varphi^{\ast})^{-1}$. 
\end{proof}

\section{Non-existence of small subgroups of $\Aut_{\bar 0}(\mathcal M)$}

In this section, we prove that $\Aut_{\bar 0}(\mathcal M)$ does not 
contain small subgroups, which means that there exists an open
neighbourhood of the identity in $\Aut_{\bar 0}(\mathcal M)$ such 
that each subgroup contained in this neighbourhood consists only of the 
identity.
As a consequence, the topological group $\Aut_{\bar 0}(\mathcal M)$ carries the 
structure of a real Lie group  by a result of Yamabe (c.f. \cite{Yamabe}).

\smallskip
Before proving the non-existence of small subgroups, a few technical 
preparations are needed:
 Consider $\mathbb C^{m|n}$ and let $z_1,\ldots,z_m,\xi_1,\ldots,\xi_n$ denote 
 coordinates on $\mathbb C^{m|n}$.
 Let $U\subseteq \mathbb C^m$ be an open subset.
 For $f=\sum_\nu f_{\nu}\xi^{\nu} \in\mathcal O_{\mathbb C^{m|n}}(U)$ define 
 $$\left|\left|f\right|\right|_U =\left|\left|\sum_{\nu}f_\nu \xi^\nu\right|
 \right|_U \coloneqq\sum_\nu \left|\left|f_\nu\right|\right|_U,$$
 where $||f_\nu||_U$ denotes the supremum norm of the holomorphic 
 function $f_\nu$ on $U$.
 For any morphism 
 $\varphi:\mathcal U=(U,\mathcal O_{\mathbb C^{m|n}}|_U)
 \rightarrow \mathbb C^{m|n}$ 
 define 
 $$||\varphi||_U\coloneqq\sum_{i=1}^m ||\varphi^*(z_i)||_U
 +\sum_{j=1}^n ||\varphi^*(\xi_j)||_U.$$
 
 \begin{lemma}
 Let $\mathcal U=(U,\mathcal O_{\mathbb C^{m|n}}|_{U})$ be a superdomain in 
 $\mathbb C^{m|n}$.
 For any relatively compact open subset $U'$ of $U$ there exists
 $\varepsilon>0$ such that any morphism 
 $\psi:\mathcal U\rightarrow \mathbb C^{m|n}$ with the property
 $||\psi-\mathrm{id}||_U<\varepsilon$ is biholomorphic as a morphism from 
 $\mathcal U'=(U',\mathcal O_{\mathbb C^{m|n}}|_{U'})$ onto its image. 
 \end{lemma}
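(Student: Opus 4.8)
The plan is to reduce the statement to two facts: that the underlying holomorphic map $\tilde\psi|_{U'}$ is a biholomorphism onto an open subset of $\mathbb C^m$, and that the reduced super-Jacobian of $\psi$ is invertible at every point of $U'$. Once both hold, the inverse function theorem for supermanifolds (which characterises local isomorphisms by invertibility of the reduced super-Jacobian) shows $\psi$ is a local isomorphism at each point of $U'$, and since $\tilde\psi$ is a homeomorphism onto its open image, the local inverse morphisms glue to a global inverse, giving the desired isomorphism onto $(\tilde\psi(U'),\mathcal O_{\mathbb C^{m|n}}|_{\tilde\psi(U')})$. The step common to both facts is to fix an intermediate open set $U''$ with $U'\Subset U''\Subset U$ and invoke the Cauchy estimates: smallness of $\psi^*(z_i)-z_i$ and of the coefficient functions of $\psi^*(\xi_j)-\xi_j$ in the sup-norm on $U$ controls, with a constant depending only on $\mathrm{dist}(\overline{U''},\partial U)$, all their first-order $z$-derivatives on $U''$. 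Hence, after shrinking $\varepsilon$, I may assume that $\|D\tilde\psi-I\|$ and $\|D_{jk}-\delta_{jk}\|$ are as small as I like on $\overline{U''}$, where $D_{jk}(z)$ is the coefficient of $\xi_k$ in $\psi^*(\xi_j)$.

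For the underlying map I would first note that invertibility of $D\tilde\psi$ on $\overline{U'}$ (it is close to the identity matrix) makes $\tilde\psi$ locally biholomorphic there. Global injectivity on $U'$ is the point that needs care, since $U'$ need not be convex, so I would split into two regimes. Fix $\eta>0$ smaller than $\mathrm{dist}(\overline{U'},\partial U'')$, so that for $p,q\in U'$ with $|p-q|<\eta$ the segment $[p,q]$ lies in $U''$; arranging $\|D\tilde\psi-I\|_{U''}\le\tfrac12$, the mean-value estimate $|\tilde\psi(p)-\tilde\psi(q)-(p-q)|\le\tfrac12|p-q|$ forces $\tilde\psi(p)\ne\tilde\psi(q)$. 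For $|p-q|\ge\eta$ the triangle inequality gives $|\tilde\psi(p)-\tilde\psi(q)|\ge|p-q|-|(\tilde\psi(p)-p)-(\tilde\psi(q)-q)|\ge\eta-2\sqrt m\,\varepsilon$, which is positive once $\varepsilon<\eta/(2\sqrt m)$. Together these yield injectivity on $U'$, hence a biholomorphism onto the open image $\tilde\psi(U')$. \emph{This injectivity argument is the main obstacle}, everything else reducing to it together with linear algebra and the propagation of smallness.

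It remains to handle the odd directions and assemble the conclusion. Since each $\|D_{jk}-\delta_{jk}\|_U<\varepsilon$, for $\varepsilon$ small the matrix $D(z)=(D_{jk}(z))$ stays within $\tfrac12$ of $I_n$ in operator norm and is therefore invertible for every $z\in\overline{U'}$. At a point $p\in U'$ the reduced super-Jacobian of $\psi$ is block diagonal with even--even block $D\tilde\psi(p)$ and odd--odd block $D(p)$, since the off-diagonal blocks consist of derivatives of elements of the opposite parity and hence have vanishing body; both diagonal blocks are invertible by the two previous paragraphs, so the reduced super-Jacobian is invertible at $p$. By the inverse function theorem for supermanifolds $\psi$ is a local isomorphism at each point of $U'$, and combining this with the fact that $\tilde\psi\colon U'\to\tilde\psi(U')$ is a homeomorphism shows that $\psi\colon\mathcal U'\to(\tilde\psi(U'),\mathcal O_{\mathbb C^{m|n}}|_{\tilde\psi(U')})$ is an isomorphism of supermanifolds, i.e.\ $\psi|_{\mathcal U'}$ is biholomorphic onto its image.
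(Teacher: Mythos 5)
Your proof is correct and follows essentially the same route as the paper's: Cauchy estimates force $D\tilde\psi$ close to the identity (hence local biholomorphy of $\tilde\psi$), injectivity of $\tilde\psi$ on $U'$ is established separately, and invertibility of the coefficient matrix of the $\xi_k$ in $\psi^*(\xi_j)$ handles the odd directions. The only difference is presentational: where the paper cites Hirsch for injectivity and leaves the final assembly implicit, you spell out the two-regime injectivity argument on the non-convex $U'$ and the block-diagonal reduced super-Jacobian explicitly.
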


\begin{proof}
 Let $r>0$ such that the closure of the polydisc 
 $\Delta^n_r(z)=\{(w_1,\ldots, w_m)|\,|w_j-z_j|< r\}$ is contained in 
 $U$ for any $z=(z_1,\ldots, z_m)\in U'$.
 Let $v\in \mathbb C^m$ be any non-zero vector.
 Then we have $z+\zeta v\in U$ for any $z\in U'$ and 
 $\zeta$ in the closure of
 $\Delta_{\frac{r}{||v||}}(0)=\{t\in \mathbb C|\, |t|<\frac{r}{||v||}\}$.
 If for given $\varepsilon>0$ it is $||\psi-\mathrm{id}||_U<\varepsilon$ then we have in particular 
 $||\tilde\psi-\mathrm{id}||_U<\varepsilon$ for the supremum norm 
 of the underlying maps $\tilde\psi,\mathrm{id}:U\rightarrow \mathbb C^m$.
 Then, for the differential $D\tilde\psi$ of $\tilde\psi$ and any non-zero vector 
 $v\in \mathbb C^m$ and any $z \in U^\prime$ we have
 \begin{align*}
  \left|\left|D\tilde\psi (z)(v)-v\right|\right|
 &=\left|\left|\frac{d}{dt}\left(\tilde\psi(z+tv)-(z+tv)\right)\right|\right|
 =\frac{1}{2\pi}\left|\left|\int_{\partial \Delta_{\frac{r}{||v||}}(0)} 
 \frac{\tilde\psi(z+\zeta v)-(z+\zeta v)}{\zeta^2} d\zeta\right|\right|\\
 &\leq\frac{1}{2\pi}\int_{\partial\Delta_{\frac{r}{||v||}}(0)}\left|\left|
 \frac{\tilde\psi(z+\zeta v)-(z+\zeta v)}{\zeta^2} \right|\right| d\zeta\\
 &<\frac{\varepsilon ||v||}{r}.
 \end{align*}
 This implies $|| D\tilde\psi(z)-\mathrm{id}||< \frac{\varepsilon}{r}$
 with respect to the operator norm, for any $z\in U'$.
 Thus $\tilde \psi$ is locally biholomorphic on $U'$ if $\varepsilon$ is small
 enough.
 Moreover, $\varepsilon$ might now be chosen such that $\tilde\psi$ is 
 injective (see e.g. \cite{Hirsch}, Chapter 2, Lemma 1.3).
 
 \smallskip\noindent
 Let $\psi_{j,k},\psi_{j,\nu}$ be holomorphic functions on $U$
 such that 
 $\psi^*(\xi_j)=\sum_{k=1}^n \psi_{j,k}\xi_k +\sum_{||\nu||\geq 3} \psi_{j,\nu}
 \xi^\nu.$
 It is now enough to show 
 $$\det ((\psi_{j,k})_{1\leq j,k\leq n}(z))\neq 0$$
 for all $z\in U'$ and $\varepsilon$ small enough in order to prove that $\psi$
 is a biholomorphism form $\mathcal U'$ onto its image.
 This follows from the fact that we assumed,
 via $||\psi-\mathrm{id}||_U<\varepsilon$, that   
 $||\psi_{j,k}||_U< \varepsilon$ if
 $j\neq k$ and $||\psi_{j,j}-1||_U<\varepsilon$.
 \end{proof}
 
This lemma now allows us to prove that $\Aut_{\bar 0}(\mathcal M)$ contains
no small subgroups; for a similar result in the classical case 
see \cite{BMLocallyCompactGroupsDiffTransformations}, Theorem 1. 

\begin{prop}
 The topological group $\Aut_{\bar 0}(\mathcal M)$ has no small subgroups,
 i.e. there is a neighbourhood of the identity which contains no non-trivial
 subgroup.
\end{prop}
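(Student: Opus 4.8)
The plan is to exhibit the required neighbourhood by combining control of the underlying base-and-bundle data through the known Lie group $\Aut(E)$ with the decomposition $\varphi^\ast=\varphi_0^\ast\exp(Y)$ from Section~2 and a growth estimate for the iterates of $\exp(Y)$. Concretely, I would fix a finite cover of the compact manifold $M$ by coordinate charts $U_i$ together with relatively compact subcharts $U_i'\Subset U_i$ that still cover $M$, and measure the distance of a morphism from the identity by the supremum norms $\|\cdot\|_{U_i'}$ introduced above (the preceding Lemma being exactly what guarantees that sufficiently norm-small morphisms restrict to biholomorphisms onto their image on $U_i'$, so that these norm-balls are genuine neighbourhoods of well-behaved local morphisms and the Section~2 decomposition applies). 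I would then take $\mathcal N$ to be the intersection of such a norm-ball $\{\varphi:\|\varphi-\mathrm{id}\|_{U_i'}<\varepsilon\ \text{for all }i\}$ with the preimage, under the continuous homomorphism $\varphi\mapsto\varphi_0\in\Aut(E)$, of a neighbourhood of the identity in $\Aut(E)$ containing no non-trivial subgroup; such a neighbourhood exists because $\Aut(E)$ is a complex Lie group by \cite{Morimito} and Lie groups have no small subgroups.

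Given a subgroup $H\subseteq\mathcal N$, the first step is a reduction to the purely ``super'' situation. Since $\varphi\mapsto\varphi_0$ is a continuous homomorphism into $\Aut(E)$ and carries $H$ into a subgroup contained in a small-subgroup-free neighbourhood, the image of $H$ in $\Aut(E)$ is trivial. As $\varphi_0$ covers $\tilde\varphi$, this means $\tilde\varphi=\mathrm{id}_M$ and $\varphi_0=\mathrm{id}_E$ for every $\varphi\in H$. Consequently $\varphi_0^\ast=\mathrm{id}$ in each chart, and the decomposition of Section~2 collapses to $\varphi^\ast=\exp(Y_\varphi)$, where $Y_\varphi$ is an even nilpotent superderivation raising the $\mathbb Z$-degree by at least $2$.

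The heart of the matter is then to show that a non-identity such $\varphi$ cannot keep all its powers inside $\mathcal N$. If $\varphi\neq\mathrm{id}$, then $\varphi^\ast\neq\mathrm{id}$ on some chart $U_i$, so $Y:=Y_\varphi\neq0$ there, and since $Y$ is a derivation it acts non-trivially on some coordinate. Because the underlying maps are all the identity, the underlying map of each power is the identity and $(\varphi^k)^\ast=(\varphi^\ast)^k=\exp(kY)$ on $U_i$ (the operators commute). Writing $Y=\sum_{d\geq2}Y_d$ for its homogeneous components and letting $d_0$ be the least degree with $Y_{d_0}\neq0$, the key observation is that in $\exp(kY)=\sum_j k^jY^j/j!$ the summand $Y^j$ raises degree by at least $jd_0$, so the lowest-degree non-trivial contribution to $\exp(kY)(z_i)-z_i$ or to $\exp(kY)(\xi_j)-\xi_j$ is precisely $k\,Y_{d_0}$ applied to a coordinate and is not cancelled by the higher powers. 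Its coefficient is a fixed non-zero holomorphic function multiplied by $k$, whence $\|\varphi^k-\mathrm{id}\|_{U_i'}\geq c\,k$ for some $c>0$ and all $k$. Thus $\varphi^k$ eventually leaves $\mathcal N$, contradicting $\varphi^k\in H\subseteq\mathcal N$; therefore $H=\{\mathrm{id}\}$.

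I expect the main obstacle to be this last, growth, step rather than the reductions. One must make precise that the $\mathbb Z$-grading is respected by the identifications $\mathcal O_\mathcal M|_{U_i}\cong\Gamma_{\Lambda E}|_{U_i}$, that the decomposition $\varphi^\ast=\exp(Y)$ is compatible across overlapping charts so that $Y\neq0$ in one chart genuinely reflects $\varphi\neq\mathrm{id}$, and that the linear-in-$k$ lower bound on a single graded coefficient really dominates the defining norm on that chart. Once the continuity of $\varphi\mapsto\varphi_0$ and the Lie group structure of $\Aut(E)$ are invoked, the reduction to $\varphi^\ast=\exp(Y)$ is comparatively routine.
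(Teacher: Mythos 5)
Your argument is correct, but it follows a genuinely different route from the paper's. The paper first uses local compactness to reduce to compact subgroups $Q$ of a norm-ball $\Omega$, then averages the coordinate pullbacks over $Q$ with respect to the Haar measure to produce a morphism $\psi$ with $\|\psi-\mathrm{id}\|_V<\varepsilon$; the quantitative invertibility lemma makes $\psi$ biholomorphic on $U$, and the invariance relation $\psi\circ q'=\psi$ then forces $q'=\mathrm{id}$ via the identity principle. You instead exploit the decomposition $\varphi^\ast=\varphi_0^\ast\exp(Y)$ of Section~2: the continuous homomorphism $\varphi\mapsto\varphi_0$ into the Lie group $\Aut(E)$ (which, being a Lie group, has no small subgroups) kills the image of any small subgroup, and the remaining ``unipotent'' part is handled by the linear growth $\|(\varphi^k)^\ast-\mathrm{id}\|_{U_i'}\geq ck$ coming from the lowest graded component $k\,Y_{d_0}$ of $\exp(kY)-\mathrm{id}$, which indeed cannot be cancelled since $Y^j$ for $j\geq 2$ and the components $Y_d$ with $d>d_0$ all raise the degree by strictly more than $d_0$. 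What this buys is an argument with no Haar measure and no reduction to compact subgroups, and it makes visible that the kernel of $\varphi\mapsto\varphi_0$ consists of exponentials of nilpotent derivations whose non-identity elements escape every bounded neighbourhood under iteration; the price is reliance on the Green--Rothstein decomposition, on Morimoto's theorem, and on the fact that $\varphi\mapsto\varphi_0$ is a continuous group homomorphism (continuity is verified in Section~3 of the paper; the homomorphism property is the functoriality of $\mathrm{gr}_1$). Two small points should be made explicit: your neighbourhood must also impose $\tilde\varphi(\overline{U_i'})\subseteq U_i$ so that the norms are defined and the resulting set is open in the given topology (exactly as the paper arranges for its set $\Omega$), and the invertibility lemma is not actually needed in your proof, since all elements of the subgroup are already automorphisms; what you do need, and what holds by the identity principle on a connected chart, is that a nonzero coefficient function of $Y_{d_0}$ on $U_i$ does not vanish identically on $U_i'$, so that the constant $c$ is positive.
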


\begin{proof}
 Let $U\subset V\subset W$ be open subsets of $M$ such that 
 $U$ is relatively compact in $V$ and $V$ is relatively compact in $W$.
 Moreover, suppose that $\mathcal W=(W, \mathcal O_{\mathcal M}|_W)$
 is isomorphic to a superdomain in $\mathbb C^{m|n}$ and let
 $z_1,\ldots, z_m,\xi_1,\ldots,\xi_n$ be local coordinates on $\mathcal W$.
 By definition 
 $\Delta(\overline{V},W)=\{\varphi\in\Aut_{\bar 0}(\mathcal M)|\,\tilde{\varphi}
 (\overline{V})\subseteq W\}$
 and $\Delta(\overline{U},V)$ are open neighbourhoods of the identity
 in $\Aut_{\bar 0}(\mathcal M)$.
 Choose $\varepsilon>0$ as in the preceding lemma such that 
 any morphism $\chi:\mathcal V\rightarrow \mathbb C^{m|n}$ 
 with $||\chi-\mathrm{id}||_V<\varepsilon$
 is biholomorphic as a morphism from $\mathcal U$ onto its image.
 Let $\Omega\subseteq \Delta(\overline{V},W)\cap \Delta(\overline{U},V)$ be the
 subset whose elements
 $\varphi$ satisfy $||\varphi-\mathrm{id}||_V<\varepsilon$.
 The set $\Omega$ is open and contains the identity.
 Since $\Aut_{\bar 0}(\mathcal M)$ is locally compact by
 Lemma~\ref{lemma: locally compact}, it is enough  to show that 
 each compact subgroup $Q\subseteq \Omega$ is trivial. 
 Otherwise for non-compact $Q$, let $\Omega'$ be an open neighbourhood of the identity 
 with compact closure $\overline{\Omega}'$ which is contained in $\Omega$,
 and suppose $Q\subseteq \Omega'$. Then 
 $\overline{Q}\subseteq \overline{\Omega}'\subset \Omega$ is a compact 
 subgroup, and $Q$ is trivial if $\overline{Q}$ is trivial.
 
 Define
 a morphism $\psi:\mathcal V\rightarrow \mathbb C^{m|n}$
 by setting 
 $$\psi^*(z_i)=\int_Q q^*(z_i)\,dq\ \ \text{ and }\ \ 
 \psi^*(\xi_j)=\int_Q q^*(\xi_j)\,dq,$$
 where the integral is taken with respect to the normalized Haar measure on $Q$.
 This yields a holomorphic morphism 
 $\psi:\mathcal V\rightarrow \mathbb C^{m|n}$ since each $q\in Q$ defines a
 holomorphic morphism 
 $\mathcal V\rightarrow\mathcal W\subseteq \mathbb C^{m|n}$.
 Its underlying map is
 $\tilde{\psi}(z)=\int_Q \tilde q (z) \,dq$.
 The morphism $\psi$ satisfies
 $$||\psi^*(z_i)-z_i||_V
 =\left|\left|\int_Q (q^*(z_i)-z_i)\,dq\right|\right|_V
 \leq \int_Q ||q^*(z_i)-z_i||_V\, dq
 $$
 and similarly
 $$||\psi^*(\xi_j)-\xi_j||_V
 \leq \int_Q ||q^*(\xi_j)-\xi_j||_V \, dq.$$
 Consequently, we have
 \begin{align*}
  ||\psi-\mathrm{id}||_V
 &= \sum_{i=1}^m ||\psi^*(z_i)-z_i||_V+\sum_{j=1}^n || \psi^*(\xi_j)-\xi_j||_V\\
 &\leq \int_Q\left(\sum_{i=1}^m||q^*(z_i)-z_i||_V\, +
 \sum_{j=1}^n ||q^*(\xi_j)-\xi_j||_V \right) \, dq \\
 &=\int_Q ||q-\mathrm{id}||_V\, dq
 < \varepsilon.
 \end{align*}

 Thus by the preceding lemma, $\psi|_U$ is a biholomorphic morphism onto its image.
 Furthermore, on $U$ we have
 $\psi\circ q'=\psi$ for any $q'\in Q$
 since 
 \begin{align*}
  (\psi\circ q')^*(z_i)
  &=(q')^*(\psi^*(z_i))
 =(q')^*\left(\int_Q q^*(z_i)\,dq\right)
 =\int_Q (q')^*(q^*(z_i))\,dq\\
 &=\int_Q(q\circ q')^*(z_i)\,dq
 =\int_Q q^*(z_i)\,dq
 =\psi^*(z_i)
 \end{align*}
 due to the invariance of the Haar measure,
 and also
 $$(\psi\circ q')^*(\xi_j)=\psi^*(\xi_j).$$
 The equality $\psi\circ q'=\psi$ on $U$ implies 
 $q'|_U=\mathrm{id}_\mathcal U$ because of the invertibility of $\psi$.
 By the identity principle it follows that $q'=\mathrm{id}_{\mathcal M}$
 if $M$ is connected, and hence $Q=\{\mathrm{id}_{\mathcal M}\}$.
 
 In general, $M$ has only finitely many connected components since $M$ is 
 compact. Therefore, a repetition of the preceding argument yields the existence
 of a neighbourhood of the identity of $\Aut_{\bar 0}(\mathcal M)$ without 
 any non-trivial subgroups.
\end{proof}

By Theorem~3 in \cite{Yamabe}, the preceding proposition implies the following:
\begin{cor}\label{cora}
 The topological group $Aut_{\bar 0}(\mathcal M)$ can be endowed with the
 structure of a real Lie group.
\end{cor}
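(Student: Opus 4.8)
The plan is to deduce the corollary directly from the structure theory of locally compact groups without small subgroups, in the form stated as Theorem~3 of \cite{Yamabe}: a locally compact, Hausdorff topological group satisfying the second axiom of countability and having no small subgroups carries a compatible structure of a real Lie group. Since this is a deep cited result, the entire task reduces to verifying that $\Aut_{\bar 0}(\mathcal M)$ satisfies each of its hypotheses, and every one of these has already been supplied in the preceding parts of the excerpt. So the proof is essentially an assembly step rather than a new argument.

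First I would collect the structural facts in order. By the Proposition of Section~$\S 3$, the set $\Aut_{\bar 0}(\mathcal M)$ is a topological group under composition and inversion of automorphisms. By the two Remarks following the definition of the topology it is Hausdorff and second countable, the latter being inherited from the second-countability of the underlying compact manifold $M$. By Lemma~\ref{lemma: locally compact} it is locally compact. Finally, the Proposition immediately preceding this corollary establishes the decisive property: there is a neighbourhood of the identity in $\Aut_{\bar 0}(\mathcal M)$ containing no non-trivial subgroup, i.e. the group has no small subgroups. With all four hypotheses verified, I would then simply invoke Theorem~3 of \cite{Yamabe} to obtain the asserted real Lie group structure, compatible with the given topology.

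The point worth emphasising is that the genuine obstacle does \emph{not} lie in this corollary at all: it lies in the no-small-subgroups property, whose verification was the technical heart of the preceding proposition (the averaging of the pullbacks $q^{\ast}(z_i)$ and $q^{\ast}(\xi_j)$ over a compact subgroup $Q$ against the normalized Haar measure, combined with the local biholomorphy criterion of the earlier lemma). Once that property is in hand, together with the already-established local compactness and second-countability, the present statement follows formally from the cited solution of Hilbert's fifth problem in the no-small-subgroups setting, and no further computation is needed.
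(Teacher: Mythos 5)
Your proposal is correct and follows exactly the paper's route: the paper likewise derives the corollary immediately from Theorem~3 of \cite{Yamabe}, applied to the preceding proposition on the non-existence of small subgroups together with the earlier facts that $\Aut_{\bar 0}(\mathcal M)$ is a locally compact, second-countable Hausdorff topological group. No further comment is needed.
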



\section{One-parameter subgroups of $\Aut_{\bar 0}(\mathcal M)$}

In order to obtain results on the regularity of the action of 
$\Aut_{\bar 0}(\mathcal M)$ on the compact complex supermanifold $\mathcal M$
and to characterize the Lie algebra of $\Aut_{\bar 0}(\mathcal M)$,
we study continuous one-parameter subgroups of $\Aut_{\bar 0}(\mathcal M)$.
Each continuous 
one-parameter subgroup $\mathbb R\rightarrow
\Aut_{\bar 0}(\mathcal M)$ is an analytic map between the Lie groups
$\mathbb R$ and $\Aut_{\bar 0}(\mathcal M)$.

We prove that the action of each continuous one-parameter
subgroup of $\Aut_{\bar 0}(\mathcal M)$ on $\mathcal M$ is analytic and
induces an even holomorphic super vector field on $\mathcal M$. 
Consequently, the Lie algebra of $\Aut_{\bar 0}(\mathcal M)$ may 
be identified with the Lie algebra $\mathrm{Vec}_{\bar 0}(\mathcal M)$ of 
even holomorphic super vector fields on $\mathcal M$, and
$\Aut_{\bar 0}(\mathcal M)$ carries the structure of a complex Lie group 
whose action on the supermanifold $\mathcal M$ is holomorphic.

\begin{defi}
 A continuous one-parameter subgroup of automorphisms of $\mathcal M$ is a 
 family of automorphisms $\varphi_t:\mathcal M\rightarrow\mathcal M$, 
 $t\in \mathbb R$ such that the map
 $\mathbb R\rightarrow \Aut_{\bar 0}(\mathcal M)$,
 $t\mapsto \varphi_t$, is a continuous group homomorphism. 
\end{defi}

\begin{rmk}\label{rmk: cont. 1psg}
 Let $\varphi_t:\mathcal M\rightarrow\mathcal M$, $t\in\mathbb R$, be a family 
 of automorphisms satisfying $\varphi_{s+t}=\varphi_s\circ\varphi_t$ for
 all $s,t\in \mathbb R$, and such that
 $\tilde\varphi:\mathbb R\times M\rightarrow M$, $\tilde\varphi(t,p)=
 \tilde{\varphi}_t(p)$ is continuous.
 Then $\varphi_t$ is a continuous one-parameter subgroup if and only if
 the following condition is satisfied:
 Let $U, V\subset M$ be open subsets, and $[a,b]\subset \mathbb R$
 such that $\tilde\varphi([a,b]\times U)\subseteq V$.
 Assume moreover that there are local coordinates
 $z_1,\ldots, z_m,\xi_1,\ldots,\xi_n$  for $\mathcal M$ on $U$. 
 Then for any $f\in\mathcal O_{\mathcal M}(V)$ there are continuous
 functions $f_\nu:[a,b]\times U\rightarrow \mathbb C$ with 
 $(f_\nu)_t=f_\nu(t,\cdot)\in \mathcal O_{\mathcal M}(U)$ for fixed 
 $t\in [a,b]$ such
 that $$(\varphi_t)^*(f)=\sum_\nu f_\nu (t,z)\xi^\nu.$$
 We say that the action of the one-parameter subgroup $\varphi$ on 
 $\mathcal M$ is analytic if each 
 $f_\nu (t,z)$ is analytic in both components.
\end{rmk}

\begin{prop}\label{prop: 1-psg}
 Let $\varphi$ be continuous one-parameter subgroup of automorphisms on 
 $\mathcal M$. Then the action of $\varphi$ on $\mathcal M$ is analytic.
\end{prop}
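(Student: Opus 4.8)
The plan is to realize the one-parameter subgroup $\varphi_t$ as the flow of a single even holomorphic super vector field $X$ on $\mathcal M$, and then to read off the asserted analyticity of the coefficient functions $f_\nu(t,z)$ of Remark~\ref{rmk: cont. 1psg} from the classical theorem on analytic dependence of solutions of a holomorphic ordinary differential equation on the time parameter and on the initial data. First I would dispose of the pieces that are already governed by classical Lie theory. The assignment $t\mapsto\tilde\varphi_t$ is a continuous homomorphism $\mathbb R\to\Aut(M)$, and $\Aut(M)$ is a complex Lie group by the theorem of Bochner and Montgomery; since a continuous homomorphism into a Lie group is analytic, $\tilde\varphi_t$ is the flow of a holomorphic vector field on $M$ and $(t,p)\mapsto\tilde\varphi_t(p)$ is real-analytic in $t$ and holomorphic in $p$. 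Likewise, by the continuity of $\varphi\mapsto\varphi_0$ noted in Section~3 and the functoriality $(\varphi\circ\psi)_0=\varphi_0\circ\psi_0$, the map $t\mapsto(\varphi_t)_0$ is a continuous, hence analytic, one-parameter subgroup of the complex Lie group $\Aut(E)$. This settles the degree-preserving (linear) part of $(\varphi_t)^*$ and isolates the problem to the nilpotent corrections $Y_t$ in the local decomposition $(\varphi_t)^*=(\varphi_t)_0^*\exp(Y_t)$ of Section~2.

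The core of the argument is the existence of the generator $X=\frac{d}{dt}\big|_{t=0}(\varphi_t)^*$ as an even derivation of $\mathcal O_\mathcal M$. The key structural input is the group law, which I would use to propagate differentiability away from $t=0$: from $(\varphi_{t+s})^*(f)=(\varphi_t)^*\big((\varphi_s)^*(f)\big)$ one gets that $\frac{1}{s}\big((\varphi_{t+s})^*(f)-(\varphi_t)^*(f)\big)=(\varphi_t)^*\big(\frac{1}{s}((\varphi_s)^*(f)-f)\big)$, so that if the difference quotient at $0$ converges to some $Xf$ in the topology on sections, then the continuity of the pullback (Lemma~\ref{le123}) forces convergence for every $t$, with limit $(\varphi_t)^*(Xf)$. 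To produce the limit at $t=0$ I would import the averaging technique of Bochner and Montgomery: for a mollifier $\rho\in C_c^\infty(\mathbb R)$, the coefficientwise smoothed pullbacks $\int_{\mathbb R}\rho(t)(\varphi_t)^*(f)\,dt$ are differentiable in $t$ (the difference quotient transfers onto $\rho$, yielding $-\int\rho'(t)(\varphi_t)^*(f)\,dt$), and a density argument then removes the smoothing. That the resulting $X$ is \emph{holomorphic} follows because each $(\varphi_t)^*(f)$ has holomorphic coefficients and uniform limits of holomorphic functions (together with their derivatives) on compact sets are holomorphic; that $X$ is \emph{even} follows because every $\varphi_t$ is parity-preserving. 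An equivalent, more organized route to the same conclusion is an induction along the finite filtration $(\mathcal I_\mathcal M)^k$: the action on each associated graded piece is determined by $(\varphi_t)_0\in\Aut(E)$ and is therefore analytic, and the group law expresses the $(k{+}1)$-st filtration component as the solution of a linear equation whose coefficients involve only lower, already-analytic, components.

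Once $X\in\mathrm{Vec}_{\bar 0}(\mathcal M)$ is in hand, the above identity reads $\frac{d}{dt}(\varphi_t)^*=(\varphi_t)^*\circ X$ (equivalently $X\circ(\varphi_t)^*$, since a flow commutes with its generator), so $(\varphi_t)^*=\exp(tX)$ is the flow of $X$. Writing this in the local coordinates $z_1,\ldots,z_m,\xi_1,\ldots,\xi_n$ and expanding $X=\sum_i a_i\,\partial_{z_i}+\sum_j b_j\,\partial_{\xi_j}$ with holomorphic coefficients, the finitely many coefficient functions $f_\nu(t,z)$ in $(\varphi_t)^*(f)=\sum_\nu f_\nu(t,z)\xi^\nu$ satisfy a coupled system of ordinary differential equations in $t$ whose right-hand sides are holomorphic in the unknowns and in $z$; by analytic dependence of solutions of holomorphic ordinary differential equations on time and on initial conditions, each $f_\nu(t,z)$ is analytic in both arguments, which is precisely the claim. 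The hard part will be the construction and regularity of the generator $X$ at $t=0$: one must control the convergence of the difference quotients simultaneously in all nilpotent coefficients and in the chosen topology on $\mathcal O_\mathcal M(U)$, and verify that the limiting derivation is genuinely holomorphic rather than merely smooth — this is exactly where the averaging step (or, alternatively, the filtration induction) is indispensable.
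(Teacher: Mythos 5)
Your overall strategy --- produce the infinitesimal generator $X$ first, then identify $\varphi_t$ with the flow of $X$ and quote analytic dependence of flows of holomorphic (super) vector fields --- is a genuinely different route from the paper's, and its endgame is sound: once one knows that $t\mapsto(\varphi_t)^*(f)$ is differentiable with derivative $(\varphi_t)^*(Xf)$ for an even holomorphic super vector field $X$, uniqueness for the resulting ODE system identifies $\varphi$ with the flow of $X$, whose analyticity is available (cf.\ Garnier--Wurzbacher, as the paper itself uses in Corollary~\ref{cor: Lie algebra of Aut_0}). The reduction of the underlying map and of the linear part $(\varphi_t)_0\in\Aut(E)$ to classical Lie theory also matches what the paper does: it applies Bochner--Montgomery to the induced one-parameter group on the total space of $E$ to get analyticity of the degree-$\le 1$ data.

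The genuine gap is at the step you yourself flag as the hard part: the existence of the generator at $t=0$. Mollification shows that $s\mapsto(\varphi_s)^*(g)$ is differentiable for $g$ in the range of the smoothing operator, i.e.\ for $g=\int\rho(t)(\varphi_t)^*(f)\,dt$; but ``a density argument then removes the smoothing'' is not valid. The difference-quotient operators $f\mapsto\tfrac1s\bigl((\varphi_s)^*(f)-f\bigr)$ are not equicontinuous, so convergence on a dense subspace does not propagate to all $f$ --- this is exactly the situation of a strongly continuous one-parameter group on a function space, whose generator is densely defined but in general has proper domain. What you actually need is differentiability at $0$ of the pullbacks of the finitely many coordinate functions $z_i,\xi_j$, and nothing in the mollification argument puts these specific functions in the domain. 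This is the nontrivial analytic content that Bochner--Montgomery establish (by a more delicate argument than smoothing plus density), and the paper deliberately avoids redoing it in the super setting: it invokes Bochner--Montgomery only for the induced action on $E$, then \emph{straightens} that already-analytic part so that $X=\partial/\partial w_1$ and $(\varphi_t)^*(w_1)=w_1+t+(\text{higher odd degree})$, and finally runs an induction on the odd $\mathbb Z$-degree in which the group law yields a functional equation $\alpha_{i,\nu_0}(t+s,w)=\alpha_{i,\nu_0}(t,w)+f_{i,\nu_0}(t,s,w)$ with $f_{i,\nu_0}$ built from lower-order, already-analytic data. Your alternative ``filtration induction'' gestures at this, but the decisive device is missing: such a functional equation does not trivially force $\alpha_{i,\nu_0}$ to be analytic --- that is precisely the content of the integration trick in Lemma~\ref{lemma: integration}, and without it (or an equivalent substitute) neither branch of your argument closes.
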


\begin{rmk}
 Defining a continuous one-parameter subgroup as in
 Remark~\ref{rmk: cont. 1psg}, the statement of Proposition~\ref{prop: 1-psg} 
 also holds true for complex supermanifolds $\mathcal M$ with non-compact 
 underlying manifold $M$ as compactness of $M$ is not needed for the proof.
\end{rmk}

For the proof of the proposition the following technical lemma is needed:

\begin{lemma}\label{lemma: integration}
 Let $U\subseteq V\subseteq \mathbb C^m$ be open subsets, $p\in U$,
 $\Omega\subseteq \mathbb R$ an open connected neighbourhood of $0$, and let
 $\alpha:\Omega\times U\rightarrow V$ be a continuous map 
 satisfying $\alpha(t,z)=\alpha(t+s,z)-f(t,s,z)$ for some continuous function 
 $f$ which is analytic in $(t,z)$ and small $s,t$, and $z$ near $p$. 
 If $\alpha$ is holomorphic in the second component, then it is analytic 
 on a neighbourhood of $(0,p)$. 
\end{lemma}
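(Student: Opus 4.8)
The plan is to exploit the functional equation $\alpha(t+s,z)=\alpha(t,z)+f(t,s,z)$ in order to trade the merely continuous $t$-dependence of $\alpha$ for the analytic $t$-dependence of $f$, using integration in the group parameter $s$ as a smoothing device. Fix $h>0$ small enough that the relation holds for $t,s\in[0,h]$ and $z$ in a neighbourhood of $p$, and integrate the functional equation over $s\in[0,h]$. Since $\int_0^h\alpha(t+s,z)\,ds=\int_t^{t+h}\alpha(\tau,z)\,d\tau$, this yields the identity
\begin{equation*}
 h\,\alpha(t,z)=\int_t^{t+h}\alpha(\tau,z)\,d\tau-\int_0^h f(t,s,z)\,ds.
\end{equation*}

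First I would read off from this identity that $\alpha$ is $C^1$ in $t$: the map $t\mapsto\int_t^{t+h}\alpha(\tau,z)\,d\tau$ is differentiable with derivative $\alpha(t+h,z)-\alpha(t,z)$ because $\alpha$ is continuous, while $t\mapsto\int_0^h f(t,s,z)\,ds$ is differentiable since $f$ is analytic, hence $C^1$, in $t$. Differentiating the identity in $t$ and using the functional equation once more in the form $\alpha(t+h,z)-\alpha(t,z)=f(t,h,z)$ gives
\begin{equation*}
 \partial_t\alpha(t,z)=\frac1h\Bigl(f(t,h,z)-\int_0^h\partial_t f(t,s,z)\,ds\Bigr).
\end{equation*}
The decisive point is that the right-hand side is analytic in $(t,z)$: the function $f(\,\cdot\,,h,\,\cdot\,)$ is analytic in $(t,z)$ by hypothesis, and the integral over the parameter $s$ of the analytic function $\partial_t f(\,\cdot\,,s,\,\cdot\,)$ is again analytic in $(t,z)$. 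Hence $\partial_t\alpha$ is analytic on a neighbourhood of $(0,p)$.

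To finish I would integrate back. Writing $\alpha(t,z)=\alpha(0,z)+\int_0^t\partial_\tau\alpha(\tau,z)\,d\tau$, the term $\alpha(0,z)$ is analytic because $\alpha$ is holomorphic in the second variable (this is the only place the holomorphy hypothesis enters), and the integral over $\tau$ of the analytic function $\partial_\tau\alpha$ is analytic in $(t,z)$. Therefore $\alpha$ is analytic on a neighbourhood of $(0,p)$, as claimed.

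I expect the main technical obstacle to be the two interchange claims, both instances of the assertion that integration over the parameter $s$ (respectively $\tau$) preserves analyticity in $(t,z)$, together with the differentiation under the integral sign. Concretely, analyticity of $f$ in $(t,z)$ means that $f(\,\cdot\,,s,\,\cdot\,)$ extends holomorphically to a complex neighbourhood of the real $t$-interval and of $p$; to integrate under this analytic extension one must know that the neighbourhood and the relevant bounds can be chosen uniformly in $s\in[0,h]$, which follows from the joint continuity of $f$ together with the compactness of $[0,h]$ (and yields the joint continuity of $\partial_t f$ via Cauchy estimates). Granting this uniformity, the differentiation under the integral sign and the analytic dependence of the resulting integrals are routine, and shrinking $h$ and the neighbourhood of $(0,p)$ is harmless.
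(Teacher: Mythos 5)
Your proposal starts exactly as the paper does, by integrating the relation $\alpha(t,z)=\alpha(t+s,z)-f(t,s,z)$ over $s\in[0,h]$ to obtain $h\,\alpha(t,z)=\int_t^{t+h}\alpha(\tau,z)\,d\tau-\int_0^h f(t,s,z)\,ds$, but the finishing move differs. The paper never differentiates: it telescopes $\int_t^{t+h}-\int_0^h=\int_0^t\bigl(\alpha(s+h,z)-\alpha(s,z)\bigr)ds+\int_0^h\alpha(s,z)\,ds$ and substitutes the functional equation once more to arrive at the closed formula $h\,\alpha(t,z)=\int_0^t f(s,h,z)\,ds-\int_0^h\bigl(f(t,s,z)-\alpha(s,z)\bigr)ds$, whose right-hand side is read off as analytic (the last summand being a function of $z$ alone, holomorphic since $\alpha$ is holomorphic in $z$). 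You instead differentiate the smoothed identity in $t$, use $\alpha(t+h,z)-\alpha(t,z)=f(t,h,z)$ to get an analytic expression for $\partial_t\alpha$, and integrate back from $\alpha(0,\cdot)$; this is correct and buys a perhaps more transparent statement ($\partial_t\alpha$ is itself analytic), at the cost of one extra ingredient, namely differentiation under the integral sign in $\int_0^h f(t,s,z)\,ds$. One caveat on your closing discussion: joint continuity of $f$ plus compactness of $[0,h]$ does \emph{not} by itself guarantee that the holomorphic extensions of $f(\cdot,s,\cdot)$ live on a common complex neighbourhood with uniform bounds (a jointly continuous family can be analytic in $(t,z)$ for each fixed $s$ with radii of convergence shrinking to $0$ as $s$ varies), so that step needs the uniformity as an additional hypothesis or must be extracted from the specific form of $f$ in the application. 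This is a shared imprecision rather than a defect of your route: the paper's own proof makes the same implicit assumption when it declares $\int_0^h f(t,s,z)\,ds$ analytic in $(t,z)$, and in the inductive application in Proposition \ref{prop: 1-psg} the required uniformity is indeed available.
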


\begin{proof}
 For small $t$, $h>0$, $z$ near $p$, we have
 \begin{align*}
  h\cdot\alpha(t,z)&=\int_0^h\alpha(t+s,z)ds - \int_0^h f(t,s,z) ds\\
  &=\int_t^{h+t}\alpha(s,z)ds -  \int_0^h \alpha(s,z)ds 
  -\int_0^h (f(t,s,z)-\alpha(s,z))ds\\
  &=\int_h^{h+t}\alpha(s,z)ds-\int_0^t\alpha(s,z)ds
  -\int_0^h(f(t,s,z)-\alpha(s,z))ds\\
  &=\int_0^t(\alpha(s+h,z)-\alpha(s,z))ds
    -\int_0^h(f(t,s,z)-\alpha(s,z))ds\\
  &=\int_0^t f(s,h,z)ds -\int_0^h(f(t,s,z)-\alpha(s,z))ds.
  \end{align*}
The assumption that $f$ is a continuous function which is analytic in the first 
and third component therefore implies that $\alpha$ is analytic.
\end{proof}

\begin{proof}[Proof of Proposition \ref{prop: 1-psg}]
 Due to the action property $\varphi_{s+t}=\varphi_s\circ\varphi_t$ it
 is enough to show the statement for the restriction of $\varphi$ to
 $(-\varepsilon,\varepsilon)\times\mathcal M$ for some $\varepsilon >0$.
 Let $U,V\subseteq M$ be open subsets such that $U$ is relatively compact in 
 $V$, and such that there are local coordinates $z_1,\ldots, z_m,\xi_1,\ldots,
 \xi_n$ on $V$ for $\mathcal M$.
 Choose $\varepsilon>0$ such that
 $\tilde\varphi_t(U)\subseteq V$ for any $t\in (-\varepsilon,\varepsilon)$.
 Let $\alpha_{i,\nu}$, $\beta_{j,\nu}$ be continuous functions on 
 $(-\varepsilon,\varepsilon)\times U$ with
 $(\varphi_t)^*(z_i)=\sum_{|\nu|=0} \alpha_{i,\nu}(t,z) \xi^\nu$
 and $(\varphi_t)^*(\xi_j)=\sum_{|\nu|=1} \beta_{j,\nu}(t,z) \xi^\nu$,
 where $|\nu|=|(\nu_1,\ldots,\nu_n)|=(\nu_1+\ldots +\nu_n)\!\!\mod 2 \in\mathbb Z_2$.
 We have to show that $\alpha$ and $\beta$ are analytic in $(t,z)$.
 The induced map 
 $\psi':(-\varepsilon,\varepsilon)\times U\times\mathbb C^n\rightarrow
 V\times \mathbb C^n$ on the underlying vector bundle is given by 
 $$\left(t,\left(\begin{matrix}
          z_1\\ \vdots \\ z_m \\ v_1\\ \vdots \\ v_n
         \end{matrix}\right)\right)
         \mapsto
         \left(\begin{matrix}
          \alpha_{1,0}(t,z)\\ \vdots \\ \alpha_{m,0}(t, z)\\
  \sum_{k=1}^n \beta_{1,k}(t,z)v_k\\ \vdots \\ \sum_{k=1}^n \beta_{n,k}(t,z)v_k
         \end{matrix}\right),$$
where $\beta_{j,k}=\beta_{j,e_k}$ if $e_k=(0,\ldots,0,1,0,\ldots, 0)$
denotes the $k$-th unit vector.
The map $\psi'$ is a local continuous one-parameter subgroup on 
$U\times \mathbb C^n$ 
because $\varphi$ is a continuous one-parameter subgroup.
By a result of Bochner and Montgomery the map $\psi'$ is analytic in $(t,z,v)$
(see \cite{BMGroupsDiffAnalyticTransformations}, Theorem 4).
Hence, the map
$\psi:(-\varepsilon,\varepsilon)\times \mathcal U\rightarrow \mathcal V$
given by 
$(\psi_t)^*(z_i)=\alpha_i(t,z)$, 
$(\psi_t)^*(\xi_j)=\sum_{k=1}^n \beta_{j,k}(t,z)\xi_k$
is analytic.
Let $X$ be the local vector field on $\mathcal U$ induced by $\psi$, i.e.
$$X(f)=\left.\frac{\partial}{\partial t}\right|_0 (\psi_t)^*(f).$$
We may assume that $X$ is non-degenerate, i.e. the evaluation of $X$ in $p$,
$X(p)$, does not vanish for all $p\in U$.
Otherwise, consider, instead of $\varphi$, the diagonal action on
$\mathbb C\times \mathcal M$ acting by addition of $t$ in the first component
and $\varphi_t$ in the second, and note that this action is analytic precisely 
if $\varphi$ is analytic.
For the differential $d\psi$ of $\psi$ in $(0,p)$ we have
$$d\psi\left(\left.\frac{\partial}{\partial t}\right|_{(0,p)}\right)
=\left.\frac{\partial}{\partial t}\right|_{(0,p)}
\circ \psi^*
=X(p)\neq 0.$$
Therefore, the restricted map $\psi|_{(-\varepsilon,\varepsilon)\times\{p\}}$
is an immersion and its image $\psi((-\varepsilon,\varepsilon)\times\{p\})$
is a subsupermanifold of $\mathcal V$.
Let $\mathcal S$ be a subsupermanifold of $\mathcal U$ transversal to 
$\psi((-\varepsilon,\varepsilon)\times\{p\})$ in $p$.
The map $\psi|_{(-\varepsilon,\varepsilon)\times \mathcal S}$
is a submersion in $(0,p)$ since
$d\psi (T_{(0,p)}(-\varepsilon,\varepsilon)\times\{p\}))
= T_p \psi((-\varepsilon,\varepsilon)\times\{p\})$ and 
$d\psi (T_{(0,p)}\{0\}\times\mathcal S)=T_p\mathcal S$ because
$\psi|_{\{0\}\times \mathcal U}=\mathrm{id}$.
Hence $\chi:=\psi|_{(-\varepsilon,\varepsilon)\times \mathcal S}$
is locally invertible around $(0,p)$, 
and thus invertible as a map onto its image after possibly shrinking $U$ and
$\varepsilon$,
and $$\chi_*\left(\frac{\partial}{\partial t}\right)
=(\chi^{-1})^*\circ \frac{\partial}{\partial t}\circ \chi^*
=(\chi^{-1})^*\circ \chi^*\circ X=X.$$
Therefore, after defining new coordinates 
$w_1,\ldots, w_m,\theta_1,\ldots, \theta_n$ for $\mathcal M$ on $U$ via
$\chi$, we have $X=\frac{\partial}{\partial w_1}$ and $(\varphi_t)^*$ is of the 
form
\begin{align*}
 (\varphi_t)^*(w_1)&=w_1+t+\sum_{|\nu|=0, \nu\neq 0} \alpha_{1,\nu}(t,w)\theta^\nu,\\
 (\varphi_t)^*(w_i)&=w_i+\sum_{|\nu|=0,\nu\neq 0} \alpha_{i,\nu}(t,w)\theta^\nu
 \ \ \ \text{ for } i\neq 1,\\
 (\varphi_t)^*(\theta_j)&=\theta_j+\sum_{|\nu|=1,||\nu||\neq 1} 
 \beta_{j,\nu}(t,w)\theta^\nu,
\end{align*}
for appropriate $\alpha_{i,\nu}$, $\beta_{j,\nu}$,
where $||\nu||=||(\nu_1,\ldots,\nu_n)||=\nu_1+\ldots +\nu_n$.

For small $s$ and $t$ we have 
\begin{align}
 \varphi_t^*&\left(\varphi_s^*(w_i)\right)=\varphi_t^*\left(w_i+\delta_{1,i}s+
\sum_{|\nu|=0,||\nu||\neq 0} \alpha_{i,\nu}(s,w)\theta^\nu\right)\nonumber\\ 
&= w_i+\delta_{i,1}(t+s)
+\sum_{|\nu|=0,||\nu||\neq 0} \alpha_{i,\nu}(t,w)\theta^\nu
+\sum_{|\nu|=0,||\nu||\neq 0} \varphi_t^*(\alpha_{i,\nu}(s,w)\theta^\nu).
\end{align}

Let $f_{i,\nu}(t,s,w)$ be such that
\begin{align}\sum_{|\nu|=0,||\nu||\neq 0} \varphi_t^*(\alpha_{i,\nu}(s,w)\theta^\nu)
=\sum_{|\nu|=0,||\nu||\neq 0} f_{i,\nu}(t,s,w) \theta^\nu.\end{align}
For fixed $\nu_0$ the coefficient of $\theta^{\nu_0}$, $f_{i,{\nu_0}}(t,s,w)$,
depends only on $\alpha_{i,\nu_0}(s,w+t e_1)$, 
$\beta_{j,\mu}(t,w)$ for $\mu$ with $||\mu||\leq||\nu_0||-1$,
and $\alpha_{j,\nu}(t,w)$ and its partial derivatives in
the second component for $\nu$ with $||\nu||\leq||\nu_0||-2$.
This can be shown by a calculation using the special form of 
$\varphi_t^*(w_j)$ and $\varphi_t^*(\theta_j)$ and general properties of the 
pullback of a morphism of supermanifolds.
Assume now that the analyticity near $(0,p)$ of $\alpha_{i,\nu}$, $\beta_{j,\mu}$ 
is shown for $||\nu||, ||\mu||< 2k$ and all $i,j$. Let $\nu_0$ be such that
$||\nu_0||=2k$. Then $f_{i,\nu_0}(t,s,w)$ is a continuous function which is 
analytic in $(t,w)$ near $(0,p)$ for fixed s.
Since $\varphi_t^*(\varphi_s^*(w_i))
=\varphi_{t+s}^*(w_i)$, using $(1)$ and $(2)$ we get 
$$\alpha_{i,\nu_0}(t,w)+f_{i,\nu_0}(t,s,w)
=\alpha_{i,\nu_0}(t+s,w),$$
and thus $\alpha_{i,\nu_0}(t,w)$ is analytic near $(0,p)$ by 
Lemma~\ref{lemma: integration}. Similarly, it can be shown that 
$\beta_{j,\mu_0}$ is analytic for $||\mu_0||=2k+1$ 
if $\alpha_{i,\nu}$, $\beta_{j,\mu}$ for $||\nu||$, $||\mu||< 2k+1$.
\end{proof}

\begin{cor}\label{cor: Lie algebra of Aut_0}
 The Lie algebra of $\Aut_{\bar 0}(\mathcal M)$ is isomorphic to the Lie 
 algebra $\mathrm{Vec}_{\bar 0}(\mathcal M)$ of even super vector fields on 
 $\mathcal M$, and $\Aut_{\bar 0}(\mathcal M)$ is a complex Lie group.
\end{cor}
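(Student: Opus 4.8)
The corollary packages together two claims: first, that the Lie algebra of the real Lie group $\Aut_{\bar 0}(\mathcal M)$ (whose Lie group structure was obtained in Corollary~\ref{cora}) is isomorphic to $\mathrm{Vec}_{\bar 0}(\mathcal M)$; and second, that this Lie group structure is in fact that of a \emph{complex} Lie group with holomorphic action. The plan is to use Proposition~\ref{prop: 1-psg}, which says each continuous one-parameter subgroup acts analytically, as the bridge between the abstract Yamabe Lie group structure and the concrete vector fields.

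\textbf{Identifying the Lie algebra.}
First I would set up the standard correspondence between the Lie algebra of a Lie group and its continuous one-parameter subgroups: for a Lie group $G$, the map sending a vector $X$ in $\mathrm{Lie}(G)$ to the one-parameter subgroup $t\mapsto \exp(tX)$ is a bijection onto the set of continuous one-parameter subgroups. Applying this to $\Aut_{\bar 0}(\mathcal M)$, each element of the Lie algebra corresponds to a continuous one-parameter subgroup $\varphi_t$ of automorphisms. By Proposition~\ref{prop: 1-psg} the action of $\varphi_t$ on $\mathcal M$ is analytic, so in local coordinates the pullbacks $(\varphi_t)^*$ depend analytically on $t$, and I can differentiate at $t=0$ to obtain an even holomorphic super vector field
$$X(f)=\left.\frac{\partial}{\partial t}\right|_0(\varphi_t)^*(f),$$
exactly as in the proof of the proposition. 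The map $\mathrm{Lie}(\Aut_{\bar 0}(\mathcal M))\to\mathrm{Vec}_{\bar 0}(\mathcal M)$ sending the generator of $\varphi_t$ to this $X$ is then the candidate isomorphism.

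\textbf{Checking it is an isomorphism.}
Next I would verify this map is a Lie algebra homomorphism (the bracket of generators corresponds to the bracket of vector fields, via the usual commutator-of-flows computation, which survives the super setting because the fields are even). Injectivity follows from the identity principle: if the induced field $X$ vanishes identically, then $(\varphi_t)^*=\mathrm{id}$ for all $t$ on a coordinate domain, hence everywhere by connectedness of each component, so the one-parameter subgroup is trivial. For surjectivity I would argue that every even holomorphic super vector field $X\in\mathrm{Vec}_{\bar 0}(\mathcal M)$ integrates, on the compact $\mathcal M$, to a global flow $\varphi_t\in\Aut_{\bar 0}(\mathcal M)$: the underlying real-analytic flow exists by compactness of $M$, and $X$ being even and holomorphic lets one integrate the coefficient functions to obtain analytic pullbacks, yielding a continuous one-parameter subgroup whose generator is $X$. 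The main obstacle I expect is precisely this surjectivity/integration step, namely showing that the flow is globally defined as a \emph{morphism of supermanifolds} and that $t\mapsto\varphi_t$ is continuous into $\Aut_{\bar 0}(\mathcal M)$; compactness of $M$ is the essential input ensuring the flow does not escape to infinity in finite time.

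\textbf{Upgrading to a complex Lie group.}
Finally, to see $\Aut_{\bar 0}(\mathcal M)$ is a \emph{complex} Lie group I would exhibit a complex structure on its Lie algebra that is invariant under the adjoint action and compatible with the group structure. The space $\mathrm{Vec}_{\bar 0}(\mathcal M)$ of even holomorphic super vector fields is a complex vector space, so multiplication by $i$ gives an endomorphism $J$ of $\mathrm{Lie}(\Aut_{\bar 0}(\mathcal M))$ with $J^2=-\mathrm{id}$. Since the adjoint action $\alpha$ acts by pushforward of holomorphic vector fields, it is $\mathbb C$-linear and hence commutes with $J$, so $J$ is $\mathrm{Ad}$-invariant; by the standard criterion this integrates to a left-invariant complex structure on the group making it a complex Lie group. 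The holomorphy of the action on $\mathcal M$ then follows because each one-parameter subgroup acts analytically (Proposition~\ref{prop: 1-psg}) and the induced fields are holomorphic, so the complexified flow map $\Aut_{\bar 0}(\mathcal M)\times\mathcal M\to\mathcal M$ is holomorphic in both factors.
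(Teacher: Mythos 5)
Your proposal is correct and follows essentially the same route as the paper: identify continuous one-parameter subgroups with even holomorphic super vector fields via Proposition~\ref{prop: 1-psg} (differentiation at $t=0$) in one direction and global integrability on the compact $\mathcal M$ (the paper cites Garnier--Wurzbacher, Theorem~5.4, for this) in the other, then observe that the resulting Lie algebra is complex, which yields the complex Lie group structure. Your treatment is somewhat more detailed than the paper's (explicit injectivity/surjectivity checks and the $\mathrm{Ad}$-invariance of $J$), but the substance is identical.
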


\begin{proof}
 If $\gamma:\mathbb R\rightarrow \Aut_{\bar 0}(\mathcal M)$, $t\mapsto \gamma_t$
 is a continuous one-parameter subgroup, then by 
 Proposition~\ref{prop: 1-psg} the action of $\varphi$ on $\mathcal M$ is
 analytic.
 Therefore, $\gamma$ induces an even holomorphic super vector field~$X(\gamma)$
 on $\mathcal M$ by setting 
 $$X(\gamma)=\left.\frac{\partial}{\partial t}\right|_0 (\gamma_t)^*,$$
 and $\gamma $ is the flow map of $X(\gamma)$.
  On the other hand, each $X\in \mathrm{Vec}_{\bar 0}(\mathcal M)$
 is globally integrable since $M$ is compact (c.f \cite{GarnierWurzbacher}, 
 Theorem~5.4). Its flow defines a one-parameter subgroup 
 $\gamma^X$ of $\Aut_{\bar 0}(\mathcal M)$, which is continuous.
 This yields an isomorphism of Lie algebras 
 $$\mathrm{Lie}(\Aut_{\bar 0}(\mathcal M))
 \rightarrow \mathrm{Vec}_{\bar 0}(\mathcal M).$$
 Consequently, we have $\mathrm{Lie}(\Aut_{\bar 0}(\mathcal M))
\cong \mathrm{Vec}_{\bar 0}(\mathcal M)$
 and since $\mathrm{Vec}_{\bar 0}(\mathcal M)$ is a complex Lie algebra,
 $\Aut_{\bar 0}(\mathcal M)$ carries the structure of a complex Lie group.
\end{proof}

The Lie group $\Aut_{\bar 0}(\mathcal M)$ naturally acts on $\mathcal M$; this 
action $\psi:\Aut_{\bar 0}(\mathcal M)\times\mathcal M\rightarrow \mathcal M$ is
given by $\mathrm{ev}_g\circ\psi^*=g^*$ where $\mathrm{ev}_g $ denotes the 
evaluation in $g\in \Aut_{\bar 0}(\mathcal M)$ in the first component.

\begin{cor} \label{cory}
 The natural action of $\Aut_{\bar 0}(\mathcal M)$ on $\mathcal M$ defines 
 a holomorphic morphism of supermanifolds 
 $\Aut_{\bar 0}(\mathcal M)\times\mathcal M \rightarrow\mathcal M$.
\end{cor}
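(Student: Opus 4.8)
The plan is to verify that $\psi$ is a holomorphic morphism by working in local charts, first reducing to a neighbourhood of the identity and then decomposing the action into one-parameter subgroups, whose analyticity is already guaranteed by Proposition~\ref{prop: 1-psg}. To reduce to a neighbourhood of $\{\mathrm{id}\}\times\mathcal M$, fix $g_0\in\Aut_{\bar 0}(\mathcal M)$ and for $g$ near $g_0$ write $g=g_0h$ with $h$ near the identity; the action property gives $\psi(g,\cdot)=\psi(g_0,\psi(h,\cdot))$, so that near $\{g_0\}\times\mathcal M$ the map $\psi$ is the composition of $(g,m)\mapsto(g_0^{-1}g,m)$, of $\psi$ near $\{\mathrm{id}\}\times\mathcal M$, and of the fixed automorphism $g_0$ of $\mathcal M$. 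Since left translation in the complex Lie group $\Aut_{\bar 0}(\mathcal M)$ (Corollary~\ref{cor: Lie algebra of Aut_0}) is holomorphic and $g_0$ is a holomorphic automorphism, holomorphy of $\psi$ near the identity propagates to all of $\Aut_{\bar 0}(\mathcal M)\times\mathcal M$.

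Next I would introduce holomorphic coordinates on $\Aut_{\bar 0}(\mathcal M)$ near the identity. Choosing a $\mathbb C$-basis $X_1,\dots,X_d$ of $\mathrm{Vec}_{\bar 0}(\mathcal M)\cong\mathrm{Lie}(\Aut_{\bar 0}(\mathcal M))$, the assignment $(z_1,\dots,z_d)\mapsto\exp(z_1X_1)\cdots\exp(z_dX_d)$ is a holomorphic chart near $\mathrm{id}$. By the action property, $\psi(\exp(z_1X_1)\cdots\exp(z_dX_d),m)$ is the nested composition of the maps $\Phi_j\colon(z_j,m')\mapsto\psi(\exp(z_jX_j),m')$, so it suffices to show that each $\Phi_j$ is a holomorphic morphism $(\text{neighbourhood of }0\text{ in }\mathbb C)\times\mathcal M\to\mathcal M$; the nested composition is then holomorphic in $(z_1,\dots,z_d,m)$, and this yields holomorphy of both the underlying map and the pullback on coordinate functions, hence that $\psi$ is a holomorphic morphism of supermanifolds.

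The heart of the argument, and the step I expect to be the main obstacle, is the holomorphy of each $\Phi_j$ in the complex variable $z_j$. Writing $z=s+it$ and using that $\mathrm{Vec}_{\bar 0}(\mathcal M)$ is a complex Lie algebra, the even fields $X_j$ and $iX_j$ commute, so $\exp(zX_j)=\exp(sX_j)\exp\!\big(t(iX_j)\big)$ near $0$; hence $\Phi_j(z,\cdot)$ is the composition of the flows of $X_j$ and of $iX_j$, each of which is analytic and holomorphic along $\mathcal M$ by Proposition~\ref{prop: 1-psg}, applied to the one-parameter subgroups $s\mapsto\exp(sX_j)$ and $t\mapsto\exp(t(iX_j))$ furnished by Corollary~\ref{cor: Lie algebra of Aut_0}. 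Thus $\Phi_j$ is real-analytic in $(s,t)$ and holomorphic along $\mathcal M$. To upgrade this to holomorphy in $z$, apply $(\exp(zX_j))^*$ to a local coordinate function $f$ and set $G(z)=(\exp(zX_j))^*(f)$. Differentiating the group law, the infinitesimal generator governing $\partial_sG$ is $X_j$ while the one governing $\partial_tG$ is $iX_j$; since the complex structure on $\mathrm{Vec}_{\bar 0}(\mathcal M)$ is scalar multiplication by $i$ on derivations of $\mathcal O_{\mathcal M}$, one has $iX_j=i\cdot X_j$ and therefore
\begin{align*}
\partial_t G = i\,\partial_s G,\qquad\text{so that}\qquad \partial_{\bar z}G=\tfrac12\big(\partial_s+i\partial_t\big)G=0 .
\end{align*}
Consequently the coefficient functions of $\Phi_j$ are holomorphic in $z_j$ as well as in the coordinates of $\mathcal M$, so $\Phi_j$ is a holomorphic morphism and $\psi$ is holomorphic by composition. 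The delicate point is precisely this compatibility of the complex structure of $\Aut_{\bar 0}(\mathcal M)$---equivalently of $\mathrm{Vec}_{\bar 0}(\mathcal M)$---with holomorphy along $\mathcal M$, namely verifying that the second-direction generator is exactly $i$ times the first; the remaining steps are routine compositions of holomorphic morphisms.
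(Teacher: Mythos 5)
Your proposal is correct and follows essentially the same route as the paper, whose proof of this corollary is only a two-line sketch appealing to the holomorphy of one-parameter subgroup actions (Proposition~\ref{prop: 1-psg}) and the fact that each fixed $g\in\Aut_{\bar 0}(\mathcal M)$ is a biholomorphism of $\mathcal M$. You have simply supplied the details the paper omits — the translation to a neighbourhood of the identity, the second-kind coordinates $\exp(z_1X_1)\cdots\exp(z_dX_d)$, and in particular the Cauchy--Riemann verification $\partial_{\bar z}G=0$ upgrading real-analyticity in the group parameter to holomorphy — all of which are sound.
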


\begin{proof}
 Since the action of each continuous one-parameter subgroup of
$ \Aut_{\bar 0}(\mathcal M)$ on $\mathcal M$ is holomorphic by 
the preceding considerations, and each $g\in \Aut_{\bar 0}(\mathcal M)$
is a biholomorphic morphism $g:\mathcal M\rightarrow\mathcal M$,
the action~$\Phi$ is a holomorphic. 
\end{proof}

If a Lie supergroup $\mathcal G$ (with Lie superalgebra $\mathfrak g$
of right-invariant super vector fields)
acts on a supermanifold 
$\mathcal M$ via $\psi:\mathcal G\times \mathcal M\rightarrow
\mathcal M$, this action $\psi$ induces an infinitesimal action 
$d\psi:\mathfrak g\rightarrow\mathrm{Vec}(\mathcal M)$ defined by 
$d\psi(X)=(X(e)\otimes \mathrm{id}_\mathcal M^*)\circ \psi^*$
for any $X\in\mathfrak g$, 
where $X\otimes \mathrm{id}_\mathcal M^*$ denotes the canonical 
extension of the vector field $X$ on
$\mathcal G$ to a vector field on $\mathcal G\times \mathcal M$,
and $(X(e)\otimes\mathrm{id}_\mathcal M^*)$ is its evaluation in the neutral 
element $e$ of $\mathcal G$.

\begin{cor}\label{new-cor}
Identifying the Lie algebra of $\Aut_{\bar 0}(\mathcal M)$ with
$\mathrm{Vec}_{\bar 0}(\mathcal M)$ as in Corollary 
\ref{cor: Lie algebra of Aut_0}, the induced infinitesimal action of the action 
$\psi:\Aut_{\bar 0}(\mathcal M)\times\mathcal M\rightarrow\mathcal M$ in 
Corollary \ref{cory} is the inclusion
$\mathrm{Vec}_{\bar 0}(\mathcal M)\hookrightarrow\mathrm{Vec}(\mathcal M)$.
\end{cor}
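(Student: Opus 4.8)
The plan is to evaluate the claimed identity on an arbitrary local section $f\in\mathcal O_{\mathcal M}(V)$ and to reduce the abstract infinitesimal action to a differentiation along a one-parameter subgroup, where the identification of Corollary~\ref{cor: Lie algebra of Aut_0} is completely explicit. Fix $X\in\mathrm{Lie}(\Aut_{\bar 0}(\mathcal M))$ and let $\gamma:\mathbb R\to\Aut_{\bar 0}(\mathcal M)$, $t\mapsto\gamma_t$, be the one-parameter subgroup whose velocity at the identity is $X(e)=\frac{\partial}{\partial t}\big|_0\gamma_t$. By Corollary~\ref{cor: Lie algebra of Aut_0}, the image of $X$ under the isomorphism $\mathrm{Lie}(\Aut_{\bar 0}(\mathcal M))\to\mathrm{Vec}_{\bar 0}(\mathcal M)$ is exactly $X(\gamma)=\frac{\partial}{\partial t}\big|_0(\gamma_t)^*$; since $T_e\Aut_{\bar 0}(\mathcal M)$ is purely even, this already lands in $\mathrm{Vec}_{\bar 0}(\mathcal M)$. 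Hence it suffices to prove that $d\psi(X)=X(\gamma)$ as elements of $\mathrm{Vec}(\mathcal M)$.

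First I would unwind the definition $d\psi(X)=(X(e)\otimes\mathrm{id}_{\mathcal M}^*)\circ\psi^*$. Evaluating the extension $X\otimes\mathrm{id}_{\mathcal M}^*$ at the neutral element $e$ means applying the tangent vector $X(e)$ to the $\Aut_{\bar 0}(\mathcal M)$-directions of a local section of $\mathcal O_{\Aut_{\bar 0}(\mathcal M)\times\mathcal M}$ while leaving the $\mathcal M$-directions untouched. As $X(e)$ is the velocity of $\gamma$, this action is computed by the derivative $\frac{\partial}{\partial t}\big|_0(\mathrm{ev}_{\gamma_t}\otimes\mathrm{id}_{\mathcal M}^*)$ of the partial evaluations in the group component along $\gamma$. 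Applying this to $\psi^*(f)$ and using the defining property $\mathrm{ev}_g\circ\psi^*=g^*$ of the natural action (Corollary~\ref{cory}) I would obtain
$$d\psi(X)(f)=\left.\frac{\partial}{\partial t}\right|_0(\mathrm{ev}_{\gamma_t}\otimes\mathrm{id}_{\mathcal M}^*)(\psi^*(f))=\left.\frac{\partial}{\partial t}\right|_0(\gamma_t)^*(f)=X(\gamma)(f).$$
Since $f$ is an arbitrary local section, this yields $d\psi(X)=X(\gamma)$, i.e. $d\psi$ is the inclusion $\mathrm{Vec}_{\bar 0}(\mathcal M)\hookrightarrow\mathrm{Vec}(\mathcal M)$.

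The step that needs the most care — and which I expect to be the only real obstacle — is justifying the middle equality, namely that evaluating the group-directional part of the extended field $X\otimes\mathrm{id}_{\mathcal M}^*$ at $e$ coincides with $\frac{\partial}{\partial t}\big|_0$ of the partial evaluations $\mathrm{ev}_{\gamma_t}\otimes\mathrm{id}_{\mathcal M}^*$. Working in a local chart on $\Aut_{\bar 0}(\mathcal M)\times\mathcal M$, with $\psi^*(f)$ expanded in the odd coordinates of $\mathcal M$ and coefficients holomorphic in the group variables and the even $\mathcal M$-variables, this amounts to commuting the $t$-derivative with the $\mathcal M$-directions and with the (finite) odd expansion. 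This interchange is legitimate precisely because the action $\psi$ is holomorphic (Corollary~\ref{cory}) and because the coefficient functions depend analytically on $t$ through $\gamma$, which is the content of Proposition~\ref{prop: 1-psg}. Once this commutation is granted, the remaining two equalities are immediate from the one-parameter subgroup relation $\mathrm{ev}_{\gamma_t}\circ\psi^*=(\gamma_t)^*$ and the definition of $X(\gamma)$ in Corollary~\ref{cor: Lie algebra of Aut_0}.
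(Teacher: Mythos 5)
Your argument is correct: unwinding $d\psi(X)=(X(e)\otimes\mathrm{id}_{\mathcal M}^*)\circ\psi^*$ along the one-parameter subgroup $\gamma$ generated by $X$ and using $\mathrm{ev}_{\gamma_t}\circ\psi^*=(\gamma_t)^*$ gives $d\psi(X)=\left.\frac{\partial}{\partial t}\right|_0(\gamma_t)^*=X(\gamma)$, which is exactly the image of $X$ under the identification of Corollary~\ref{cor: Lie algebra of Aut_0}. The paper states this corollary without proof, treating it as immediate from the definitions, and your computation (including the justification of the interchange of the $t$-derivative via Proposition~\ref{prop: 1-psg}) is precisely the intended verification.
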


\section{The Lie superalgebra of vector fields }
In this section, we prove that the Lie superalgebra $\mathrm{Vec}(\mathcal M)$
of holomorphic super vector
fields on a compact complex supermanifold $\mathcal M$ is finite-dimensional. 

First, we prove that $\mathrm{Vec}(\mathcal M)$ is finite-dimensional if
$\mathcal M$ is a split supermanifold
using that its tangent sheaf $\mathcal T_{\mathcal M}$ is a coherent sheaf of 
$\mathcal O_M$-modules, where $\mathcal O_M$ denotes again the sheaf of
holomorphic functions on the underlying manifold $M$.
Then the statement in the general case is deduced
using a filtration of the tangent sheaf.

Remark that since $\Aut_{\bar 0}(\mathcal M)$ is a complex Lie group with Lie
algebra isomorphic to the Lie algebra $\mathrm{Vec}_{\bar 0}(\mathcal M)$ of 
even holomorphic super vector fields on $\mathcal M$ 
(see Corollary~\ref{cor: Lie algebra of Aut_0}), we
already know that the even part of $\mathrm{Vec}(\mathcal M)
=\mathrm{Vec}_{\bar 0}(\mathcal M)\oplus \mathrm{Vec}_{\bar 1}(\mathcal M)$
is finite-dimensional.

\begin{lemma}
 Let $\mathcal M$ be a split complex supermanifold.
 Then its tangent sheaf $\mathcal T_\mathcal M$ is a coherent sheaf of
 $\mathcal O_M$-modules.
\end{lemma}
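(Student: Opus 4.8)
The plan is to reduce the statement to the coherence of the structure sheaf $\mathcal O_M$ of the base manifold, which is Oka's coherence theorem, by showing that $\mathcal T_\mathcal M$ is in fact \emph{locally free of finite rank} as a sheaf of $\mathcal O_M$-modules. Since coherence is a local property and a locally free sheaf of finite rank over a coherent sheaf of rings is coherent, this suffices.

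First I would record the two module structures in play. The $\mathcal O_M$-module structure on $\mathcal T_\mathcal M$ is the one obtained by restriction of scalars along the inclusion $\mathcal O_M\hookrightarrow\mathcal O_\mathcal M$ of the degree-zero part, which is available precisely because $\mathcal M$ is split. Indeed, since $\mathcal M$ is split there is a holomorphic vector bundle $E$ of rank $n$ on $M$ with $\mathcal O_\mathcal M\cong\bigoplus_{k=0}^n\bigwedge^k E$ as sheaves of $\mathcal O_M$-modules. Each summand $\bigwedge^k E$ is locally free of finite rank $\binom{n}{k}$ over $\mathcal O_M$, so $\mathcal O_\mathcal M$ is itself locally free of rank $2^n$ over $\mathcal O_M$; in particular it is a coherent $\mathcal O_M$-module and a sheaf of $\mathcal O_M$-algebras.

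Next I would invoke the local structure of super vector fields. On a coordinate domain $U$ with even coordinates $z_1,\ldots,z_m$ and odd coordinates $\theta_1,\ldots,\theta_n$, every superderivation of $\mathcal O_\mathcal M|_U$ is uniquely of the form $\sum_i f_i\,\partial_{z_i}+\sum_j g_j\,\partial_{\theta_j}$ with $f_i,g_j\in\mathcal O_\mathcal M(U)$; that is, $\mathcal T_\mathcal M|_U$ is free over $\mathcal O_\mathcal M|_U$ with frame the coordinate vector fields $\partial_{z_i},\partial_{\theta_j}$, so $\mathcal T_\mathcal M$ is locally free of rank $m+n$ over $\mathcal O_\mathcal M$. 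Combining this with the previous step, locally $\mathcal T_\mathcal M\cong\mathcal O_\mathcal M^{\,m+n}\cong\mathcal O_M^{(m+n)2^n}$ as $\mathcal O_M$-modules, so $\mathcal T_\mathcal M$ is locally free of finite rank $(m+n)2^n$ as a sheaf of $\mathcal O_M$-modules, whence coherent by Oka's theorem.

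This argument is essentially bookkeeping once the splitting is invoked; the only point requiring a little care, and the mild obstacle I would flag, is the compatibility of the two iterated free-module structures, i.e. verifying that an $\mathcal O_\mathcal M$-frame for $\mathcal T_\mathcal M$ together with an $\mathcal O_M$-frame for $\mathcal O_\mathcal M$ yields an $\mathcal O_M$-frame for $\mathcal T_\mathcal M$. I note that for a non-split $\mathcal M$ one would instead argue on small coordinate domains, where any supermanifold splits, so the same local-freeness conclusion holds and coherence, being local, is preserved; but the lemma as stated only requires the split case.
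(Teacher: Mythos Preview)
Your proposal is correct and follows essentially the same approach as the paper: both arguments show that $\mathcal T_\mathcal M$ is locally free of rank $(m+n)2^n$ over $\mathcal O_M$, the paper by directly exhibiting the $\mathcal O_M$-frame $\{\xi^\nu\partial_{z_i},\,\xi^\nu\partial_{\xi_j}\}$ on a coordinate chart, and you by factoring this into the two steps ``$\mathcal T_\mathcal M$ is free over $\mathcal O_\mathcal M$'' and ``$\mathcal O_\mathcal M$ is free over $\mathcal O_M$''. The compatibility you flag is immediate, and your closing remark on the non-split case is correct but not needed for the lemma as stated.
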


\begin{proof}
 Since $\mathcal M$ is split, its structure sheaf $\mathcal O_\mathcal M$ is
 isomorphic to $\bigwedge \mathcal E$ as an $\mathcal O_M$-module, where
 $\mathcal E$ is the sheaf of
 sections of a holomorphic vector bundle on the underlying manifold $M$. 
 Thus, the structure sheaf $\mathcal O_\mathcal M$, and hence also the tangent
 sheaf~$\mathcal T_\mathcal M$, carry the structure of a 
 sheaf of $\mathcal O_M$-modules.
   Let $U\subset M$ be an open subset such that there exist even coordinates 
 $z_1,\ldots, z_m$ and odd 
 coordinates $\xi_1,\ldots, \xi_n$. Any derivation
 $D\in \mathcal T_\mathcal M(U)$ on $U$ can uniquely be written as 
 $$D=\sum_{\nu\in (\mathbb Z_2)^n}\left(\sum_{i=1}^m f_{i,\nu} (z)
 \xi^\nu\frac{\partial}{\partial z_i}
 +\sum_{j=1}^n  g_{j,\nu}(z)\xi^\nu\frac{\partial}{\partial \xi_j}\right)$$
 where $f_{i,\nu}$, $g_{j,\nu}$ are holomorphic functions on $U$.
 Therefore, the restricted sheaf $\mathcal T_\mathcal M|_U$ is isomorphic to 
 $(\mathcal O_M|_U)^{2^n (m+n)}$ and $\mathcal T_\mathcal M$ is coherent over 
 $\mathcal O_M$.
 \end{proof}

\begin{prop}\label{vec}
 The Lie superalgebra $\mathrm{Vec}(\mathcal M)$ of holomorphic 
 super vector fields on a compact complex supermanifold $\mathcal M$
 is finite-dimensional.
\end{prop}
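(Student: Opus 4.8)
The plan is to reduce the general statement to the split case, where it follows from a finiteness theorem for coherent sheaves, and then to bootstrap along the filtration of $\mathcal{O}_\mathcal{M}$ by powers of $\mathcal{I}_\mathcal{M}$ introduced in Section~2.

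First I would dispose of the split case. If $\mathcal{M}$ is split, the preceding lemma shows that $\mathcal{T}_\mathcal{M}$ is a coherent sheaf of $\mathcal{O}_M$-modules on the compact complex manifold $M$. By the Cartan--Serre finiteness theorem, the cohomology groups $H^i(M,\mathcal{F})$ of a coherent sheaf $\mathcal{F}$ on a compact complex manifold are finite-dimensional; in particular $\mathrm{Vec}(\mathcal{M})=\mathcal{T}_\mathcal{M}(M)=H^0(M,\mathcal{T}_\mathcal{M})$ is finite-dimensional. This is the base case.

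For general $\mathcal{M}$, I would use the filtration $\{(\mathcal{I}_\mathcal{M})^k\}$ to filter the tangent sheaf. Setting
$$F^p\mathcal{T}_\mathcal{M}=\{D\in\mathcal{T}_\mathcal{M}\mid D((\mathcal{I}_\mathcal{M})^k)\subseteq(\mathcal{I}_\mathcal{M})^{k+p}\text{ for all }k\},$$
one obtains a finite decreasing filtration $\mathcal{T}_\mathcal{M}=F^{-1}\supseteq F^0\supseteq F^1\supseteq\cdots\supseteq 0$ of $\mathcal{T}_\mathcal{M}$ by $\mathcal{O}_M$-submodules, which terminates because $(\mathcal{I}_\mathcal{M})^{n+1}=0$. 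A derivation lying in $F^p$ descends to a derivation of the graded sheaf raising the $\mathbb{Z}$-grading by $p$, and its class vanishes precisely when it already lies in $F^{p+1}$; hence there is a natural injection of graded $\mathcal{O}_M$-modules
$$\text{gr}^p\mathcal{T}_\mathcal{M}=F^p/F^{p+1}\hookrightarrow\mathcal{T}_{\text{gr}\,\mathcal{M}},$$
whose target, by the split case applied to the split supermanifold $\text{gr}\,\mathcal{M}$, has finite-dimensional global sections.

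Finally I would pass to global sections. Filtering $\mathrm{Vec}(\mathcal{M})$ by the subspaces $\Gamma(M,F^p\mathcal{T}_\mathcal{M})$ and using the left-exactness of $\Gamma(M,-)$ gives injections
$$\Gamma(M,F^p)/\Gamma(M,F^{p+1})\hookrightarrow\Gamma(M,\text{gr}^p\mathcal{T}_\mathcal{M})\hookrightarrow\mathrm{Vec}(\text{gr}\,\mathcal{M}),$$
so each successive quotient is finite-dimensional; since the filtration is finite, $\mathrm{Vec}(\mathcal{M})$ itself is finite-dimensional, with $\dim\mathrm{Vec}(\mathcal{M})\leq\dim\mathrm{Vec}(\text{gr}\,\mathcal{M})$. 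The main obstacle I anticipate is the bookkeeping in the middle step: checking that $F^p\mathcal{T}_\mathcal{M}$ is a well-defined $\mathcal{O}_M$-submodule and that its associated graded genuinely embeds into the derivations of $\text{gr}\,\mathcal{O}_\mathcal{M}$. This reduces to a local-coordinate verification that the leading term of a degree-$p$ derivation is a derivation of the graded algebra, after which everything is formal once the split case is in hand.
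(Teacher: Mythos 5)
Your proposal is correct and follows essentially the same route as the paper: the split case via coherence of $\mathcal T_{\mathcal M}$ over $\mathcal O_M$ and Cartan--Serre, then the filtration of the tangent sheaf induced by the powers of $\mathcal I_{\mathcal M}$, the embedding of its associated graded into $\mathcal T_{\mathrm{gr}\,\mathcal M}$ (the paper cites Onishchik for the stronger statement that this is an isomorphism), and left-exactness of global sections to get $\dim\mathrm{Vec}(\mathcal M)\leq\dim\mathrm{Vec}(\mathrm{gr}\,\mathcal M)<\infty$. The local verification you flag at the end is routine and is exactly the content of the cited result.
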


\begin{proof}
 First, assume that $\mathcal M$ is split. Then the tangent sheaf
 $\mathcal T_\mathcal M$ is a coherent sheaf of $\mathcal O_M$-modules.
 Thus, the 
 space of global sections of $\mathcal T_\mathcal M$, $\mathrm{Vec}(\mathcal M)
 =\mathcal T_\mathcal M(M)$, is finite-dimensional 
 since $M$ is compact (c.f. \cite{CartanSerre}). 
 
 Now, let $\mathcal M$ be an arbitrary compact complex supermanifold.
 We associate the split complex supermanifold 
 $\text{gr}\,\mathcal M=(M, \text{gr}\, \mathcal O_\mathcal M)$
 as described in Section~2. Let $\mathcal I_\mathcal M$
 denote as before the subsheaf of ideal in $\mathcal O_\mathcal M$
 generated by the odd elements.
 Define the filtration of sheaves of Lie superalgebras
 $$\mathcal T_\mathcal M=:(\mathcal T_\mathcal M)_{(-1)}\subset 
 (\mathcal T_\mathcal M)_{(0)}\subset
 (\mathcal T_\mathcal M)_{(1)}\subset \ldots \subset
 (\mathcal T_\mathcal M)_{(n+1)}=0$$
 of the tangent sheaf $\mathcal T_\mathcal M$
 by setting 
 $$(\mathcal T_\mathcal M)_{(k)}=\{D\in \mathcal T_\mathcal M|\,
 D(\mathcal O_\mathcal M)\subset (\mathcal I_\mathcal M)^k,
 \,D(\mathcal I_\mathcal M)\subset (\mathcal I_\mathcal M)^{k+1}\}$$
 for $k\geq 0$.
 Moreover, define $\text{gr}_k(\mathcal T_\mathcal M)=
 (\mathcal T_\mathcal M)_{(k)}/(\mathcal T_\mathcal M)_{(k+1)}$
 and set 
 $$\text{gr}(\mathcal T_\mathcal M)=\bigoplus_{k\geq -1}
 \text{gr}_k(\mathcal T_\mathcal M).$$
 By \cite{Onishchik}, Proposition~1, the sheaf 
 $\text{gr}(\mathcal T_\mathcal M)$
 is isomorphic to the tangent sheaf of the associated split supermanifold
 $\text{gr}\, \mathcal M$.
 By the preceding considerations, the space of holomorphic super vector fields 
 on $\text{gr}\,\mathcal M$, 
 $$\text{Vec}(\text{gr}\,\mathcal M)=\text{gr}(\mathcal T_\mathcal M)(M)=
 \bigoplus_{k\geq -1} \text{gr}_k(\mathcal T_\mathcal M)(M),$$
 is of finite dimension.
 The projection onto the quotient yields 
 $$\dim (\mathcal T_\mathcal M)_{(k)}(M)-\dim(\mathcal T_\mathcal M)_{(k+1)}(M)
 \leq \dim (\text{gr}_k(\mathcal T_\mathcal M)(M))$$ 
 and 
 $\dim (\mathcal T_\mathcal M)_{(n)}(M)
 =\dim (\text{gr}_n(\mathcal T_\mathcal M)(M))$ 
 and hence by induction
 $$\dim (\mathcal T_\mathcal M)_{(k)}(M)
 \leq \sum_{j\geq k}\dim(\text{gr}_j(\mathcal T_\mathcal M)(M)),$$
 which gives 
 $$\dim (\mathcal T_\mathcal M(M))
 =\dim \left((\mathcal T_\mathcal M)_{(-1)}(M)\right)
 \leq \dim \left(\text{gr}(\mathcal T_\mathcal M)(M)\right).$$
 In particular, $\dim (\mathcal T_\mathcal M(M))$ is finite.
\end{proof}

\begin{rmk}
 The proof of the preceding proposition also shows the following
 inequality:
 $$\dim (\text{Vec}(\mathcal M))\leq \dim(\text{Vec}(\text{gr}\,\mathcal M))$$
\end{rmk}

\section{The automorphism group}

In this section, the automorphism group of a compact complex supermanifold 
is defined. This is done via the formalism of Harish-Chandra pairs for complex
Lie supergroups (c.f. \cite{Vishnyakova}).
The underlying classical Lie group is $\Aut_{\bar 0}(\mathcal M)$ and
the Lie superalgebra is $\mathrm{Vec}(\mathcal M)$, the Lie superalgebra of 
super vector fields on $\mathcal M$.
Moreover, we prove that the automorphism group satisfies a universal property. 

\smallskip

Consider the representation $\alpha$ of $\Aut_{\bar 0}(\mathcal M)$ on 
$\mathrm{Vec}(\mathcal M)$ given by 
$$\alpha(g)(X)=g_*(X)=(g^{-1})^*\circ X\circ g^*\ \ \ \text{ for }
\ \ \ g\in\Aut_{\bar 0}(\mathcal M), \, X\in \mathrm{Vec}(\mathcal M).$$
This representation $\alpha$ preserves the parity on $\mathrm{Vec}(\mathcal M)$,
and its restriction to $\mathrm{Vec}_{\bar 0}(\mathcal M)$ coincides
with the adjoint action of $\Aut_{\bar 0}(\mathcal M)$ on its Lie 
algebra $\mathrm{Lie}(\Aut_{\bar 0}(\mathcal M))\cong \mathrm{Vec}_{\bar 0}
(\mathcal M)$.
Moreover, the differential $(d\alpha)_{\mathrm{id}}$ at the identity 
$\mathrm{id}\in\Aut_{\bar 0}(\mathcal M)$ is the adjoint representation  
of $\mathrm{Vec}_{\bar 0}(\mathcal M)$ on $\mathrm{Vec}(\mathcal M)$:\\
Let $X$ and $Y$ be super vector fields on $\mathcal M$. Assume that $X$ is 
even and let $\varphi^X$ denote the corresponding one-parameter subgroup.
Then we have 
$$(d\alpha)_{\mathrm{id}}(X)(Y)=\left.\frac{\partial}
{\partial t}\right|_0 (\varphi^X_t)_*(Y)=[X,Y];$$
see e.g. \cite{Bergner}, Corollary~3.8.
Therefore, the pair $(\Aut_{\bar 0}(\mathcal M),\mathrm{Vec}(\mathcal M))$
together with the representation $\alpha$ is a complex Harish-Chandra pair, 
and using the equivalence between the category of complex Harish-Chandra 
pairs and complex Lie supergroups (c.f. \cite{Vishnyakova}, $\S$~2), we can
define the automorphism group of a compact complex supermanifold $\mathcal M$
as follows:

\begin{defi}\label{defi: automorphism group}
 Define the automorphism group $\Aut(\mathcal M)$ of a compact
 complex supermanifold to be the unique complex Lie supergroup associated
 to the Harish-Chandra pair 
 $(\Aut_{\bar 0}(\mathcal M), \mathrm{Vec}(\mathcal M))$ with adjoint representation
 $\alpha$.
\end{defi}

Since the action $\psi:\Aut_{\bar 0}(\mathcal M)\times\mathcal M\rightarrow
\mathcal M$ induces the inclusion $\mathrm{Vec}_{\bar 0}(\mathcal M)
\hookrightarrow \mathrm{Vec}(\mathcal M)$ as infinitesimal action (see Corollary \ref{new-cor}),
there exists a Lie supergroup action $\Psi:\Aut(\mathcal M)\times\mathcal M
\rightarrow \mathcal M$ with the identity $\mathrm{Vec}(\mathcal M)
\rightarrow\mathrm{Vec}(\mathcal M)$ as induced infinitesimal action 
and $\Psi|_{\Aut_{\bar 0}(\mathcal M)\times\mathcal M}=\psi$ (c.f. Theorem~5.35
in \cite{Bergner}).

The automorphism group together with $\Psi$ satisfies a universal property:

\begin{thm}\label{thm: universal property}
 Let $\mathcal G$ be a complex Lie supergroup with a holomorphic action
 $\Psi_{\mathcal G}:\mathcal G\times \mathcal M\rightarrow \mathcal M$.
 Then there is a unique morphism $\sigma:\mathcal G\rightarrow 
 \Aut(\mathcal M)$ of Lie supergroups 
 such that the diagram
$$\xymatrix{\mathcal G\times\mathcal M
\ar[dr]_-{\sigma\times \mathrm{id}_\mathcal M }
 \ar[rr]^{\Psi_\mathcal G}&&\mathcal M\\
 &\Aut(\mathcal M)\times \mathcal M \ar[ru]_-{\Psi}&&
}$$
is commutative.
\end{thm}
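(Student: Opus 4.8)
The plan is to construct the morphism $\sigma$ using the Harish-Chandra pair formalism, since $\Aut(\mathcal M)$ is \emph{defined} as the Lie supergroup associated to the pair $(\Aut_{\bar 0}(\mathcal M),\mathrm{Vec}(\mathcal M))$. By the equivalence between complex Harish-Chandra pairs and complex Lie supergroups, a morphism $\sigma:\mathcal G\to\Aut(\mathcal M)$ is the same datum as a morphism of Harish-Chandra pairs, i.e.\ a pair consisting of a homomorphism of the underlying Lie groups $\sigma_{\bar 0}:\mathcal G_{\bar 0}\to\Aut_{\bar 0}(\mathcal M)$ and a homomorphism of Lie superalgebras $d\sigma:\mathrm{Lie}(\mathcal G)\to\mathrm{Vec}(\mathcal M)$, compatible with the respective representations of the even groups on the full superalgebras. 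So the strategy reduces to producing these two pieces from the given action $\Psi_{\mathcal G}$ and checking compatibility.

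First I would extract the two components. The action $\Psi_{\mathcal G}$ restricts to a holomorphic action of the underlying classical Lie group $\mathcal G_{\bar 0}$ on $\mathcal M$ by automorphisms; evaluating at points of $\mathcal G_{\bar 0}$ gives a group homomorphism $\sigma_{\bar 0}:\mathcal G_{\bar 0}\to\Aut_{\bar 0}(\mathcal M)$ into the set of automorphisms, and continuity (hence holomorphy, by Corollary~\ref{cor: Lie algebra of Aut_0}) follows because the action is a morphism of supermanifolds and the topology on $\Aut_{\bar 0}(\mathcal M)$ is the compact-open-type topology of $\S 3$. For the superalgebra part, the infinitesimal action $d\Psi_{\mathcal G}:\mathrm{Lie}(\mathcal G)\to\mathrm{Vec}(\mathcal M)$ described just before the theorem is precisely a homomorphism of Lie superalgebras into $\mathrm{Vec}(\mathcal M)$; I take $d\sigma:=d\Psi_{\mathcal G}$.

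Next I would verify that $(\sigma_{\bar 0},d\sigma)$ is a morphism of Harish-Chandra pairs, which amounts to two compatibilities: that $d\sigma$ restricted to $\mathrm{Lie}(\mathcal G_{\bar 0})$ is the differential of $\sigma_{\bar 0}$ (both equal the even part of the infinitesimal action, by the identification of $\mathrm{Lie}(\Aut_{\bar 0}(\mathcal M))$ with $\mathrm{Vec}_{\bar 0}(\mathcal M)$ in Corollary~\ref{cor: Lie algebra of Aut_0}), and that $d\sigma$ intertwines the adjoint action of $\mathcal G_{\bar 0}$ with the representation $\alpha$ of $\Aut_{\bar 0}(\mathcal M)$, i.e.\ $d\sigma(\mathrm{Ad}(g)\,X)=\alpha(\sigma_{\bar 0}(g))\,d\sigma(X)$. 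This last equivariance is a formal consequence of the action axiom for $\Psi_{\mathcal G}$: conjugating a fundamental vector field by a group element produces the pushforward by the corresponding automorphism, which is exactly how $\alpha$ is defined. Existence of $\sigma$ then follows from the equivalence of categories.

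Finally, commutativity of the diagram and uniqueness of $\sigma$. For commutativity I would note that $\Psi\circ(\sigma\times\mathrm{id})$ and $\Psi_{\mathcal G}$ are two Lie supergroup actions of $\mathcal G$ on $\mathcal M$; since a Lie supergroup action is determined by its underlying classical action together with its induced infinitesimal action (again using the Harish-Chandra formalism, as in the reference to Theorem~5.35 of \cite{Bergner} cited above the theorem), and both sides induce the same infinitesimal action $d\Psi_{\mathcal G}$ and the same underlying action $\sigma_{\bar 0}$-through-$\psi$, they coincide. Uniqueness of $\sigma$ follows identically: any $\sigma'$ making the diagram commute must have underlying homomorphism equal to $\sigma_{\bar 0}$ (determined by the underlying action on points) and differential equal to $d\Psi_{\mathcal G}$ (determined by the infinitesimal action), so $\sigma'=\sigma$ by faithfulness of the equivalence. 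The main obstacle I anticipate is the equivariance/compatibility verification making $(\sigma_{\bar 0},d\sigma)$ a genuine morphism of Harish-Chandra pairs, since this requires carefully matching the sign and composition conventions in the definition of $\alpha$ with those of the infinitesimal action $d\Psi_{\mathcal G}$; once that is in place, both existence and uniqueness are immediate from the categorical equivalence.
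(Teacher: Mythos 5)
Your proposal is correct and follows essentially the same route as the paper: extract the underlying homomorphism $\tilde\sigma:G\rightarrow\Aut_{\bar 0}(\mathcal M)$ by evaluating the action at points, take $d\Psi_{\mathcal G}$ as the Lie superalgebra map, verify the Harish-Chandra compatibility and $\alpha$-equivariance, and obtain existence, commutativity of the diagram, and uniqueness from the equivalence of categories. The equivariance computation you flag as the main obstacle is exactly the short formal calculation the paper carries out, so there is no gap.
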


\begin{proof} 
 Let $G$ be the underlying Lie group of $\mathcal G$.
 For each $g\in G$, we have a morphism $\Psi_\mathcal G(g):\mathcal M\rightarrow 
 \mathcal M$ by setting 
 $(\Psi_\mathcal G(g))^*=\mathrm{ev}_g\circ (\Psi_\mathcal G)^*$. This morphism
 $\Psi_\mathcal G(g)$ is an automorphism of $\mathcal M$ with inverse 
 $\Psi_\mathcal G(g^{-1})$ and gives rise 
 to a group homomorphism $\tilde\sigma:G\rightarrow \Aut_{\bar 0}(\mathcal M)$,
 $g\mapsto \Psi_\mathcal G(g)$.
 
 Let $\mathfrak g$ denote the Lie superalgebra (of right-invariant super vector 
 fields) of $\mathcal G$, and
 $d\Psi_\mathcal G:\mathfrak g\rightarrow\mathrm{Vec}(\mathcal M)$ the 
 infinitesimal action induced by $\Psi_\mathcal G$. The restriction 
 of $d\Psi_\mathcal G$ to the even part $\mathfrak{g}_{\bar 0}=\mathrm{Lie}(G)$
 of $\mathfrak g$ coincides with the differential $(d\tilde \sigma)_e$ of
 $\tilde\sigma$ at the identity $e\in G$.
 
 Moreover, if $\alpha_\mathcal G$ denotes the adjoint action of $G$ on 
 $\mathfrak g$, and $\alpha$ denotes, as before, the adjoint action of 
 $\Aut_{\bar 0}(\mathcal M)$ on $\mathrm{Vec}(\mathcal M)$, 
 we have 
 \begin{align*}
d\Psi_\mathcal G(\alpha_\mathcal G(g)(X))
 &=(\Psi_\mathcal G(g^{-1}))^*\circ d\Psi_\mathcal G(X)
 \circ (\Psi_\mathcal G(g))^*
 =(\tilde\sigma(g^{-1}))^*\circ d\Psi_\mathcal G(X)\circ (\tilde\sigma(g))^*\\
 &=\alpha(\tilde\sigma(g))( d\Psi_\mathcal G(X))
 \end{align*}
 for any $g\in G$, $X\in\mathfrak g$.
 Using the correspondence between Lie supergroups and Harish-Chandra pairs, 
 it follows that there is a unique morphism $\sigma:\mathcal G
 \rightarrow\Aut(\mathcal M)$ of Lie supergroups 
 with underlying map $\tilde\sigma$ and derivative $d\Psi_\mathcal G:
 \mathfrak g\rightarrow \mathrm{Vec}(\mathcal M)$ 
 (see e.g. \cite{Vishnyakova}, $\S$~2), and $\sigma$ satisfies
 $\Psi\circ(\sigma\times\mathrm{id}_\mathcal M)=\Psi_\mathcal G$.
 
 The uniqueness of $\sigma$ follows from the fact that 
 each morphism $\tau:\mathcal G\rightarrow\Aut(\mathcal M)$ of 
 Lie supergroups fulfilling the same properties as $\sigma$ necessarily
 induces the map
 $d\Psi_\mathcal G: \mathfrak g\rightarrow \mathrm{Vec}(\mathcal M)$
 on the level of Lie superalgebras 
 and its underlying map $\tilde\tau$ has to satisfy 
 $\tilde\tau(g)=\Psi_\mathcal G(g)=\tilde\sigma(g)$.
\end{proof}

\begin{rmk}
 Since the morphism $\sigma$ in Theorem~\ref{thm: universal property} is unique,
 the automorphism group of a compact complex supermanifold $\mathcal M$
 is the unique Lie supergroup 
 satisfying the universal property formulated in 
 Theorem~\ref{thm: universal property}.
\end{rmk}

\begin{rmk}
We say that a real Lie supergroup $\mathcal G$ acts on $\mathcal M$ by 
holomorphic transformations if the underlying Lie group $G$ acts on the complex
manifold $M$ by holomorphic transformations and if there is a homomorphism of
Lie superalgebras $\mathfrak g\rightarrow \mathrm{Vec}(\mathcal M)$ which is 
compatible with the action of $G$ on $M$.
Using the theory of Harish-Chandra pairs, we also have the Lie supergroup
$\mathcal G^\mathbb C$,
the universal complexification of $\mathcal G$; see \cite{Kalus}.
The underlying Lie group of $\mathcal G^\mathbb C$ is the universal 
complexification $G^\mathbb C$ of the Lie group $G$.
Let $\mathfrak g=\mathfrak g_{\bar 0}\oplus \mathfrak g_{\bar 1}$ denote the Lie
superalgebra of $\mathcal G$, $\mathfrak g_{\bar 0}$ the Lie algebra of 
$G$. Then the Lie algebra $\mathfrak g_{\bar 0}^\mathbb C$ of $G^\mathbb C$ is 
a quotient of $\mathfrak g_{\bar 0}\otimes\mathbb C$, and 
the Lie superalgebra of $\mathcal G^\mathbb C$ can be realised as 
$\mathfrak g_{\bar 0}^\mathbb C\oplus (\mathfrak g_{\bar 1}\otimes \mathbb C)$.
The action of $G$ on $\mathcal M$ extends to a holomorphic $G^\mathbb C$-action
on $\mathcal M$, and the homomorphism $\mathfrak g\rightarrow
\mathrm{Vec}(\mathcal M)$ extends to a homomorphism 
$\mathfrak g_{\bar 0}^\mathbb C\oplus (\mathfrak g_{\bar 1}\otimes \mathbb C)
\rightarrow \mathrm{Vec}(\mathcal M)$ of complex Lie superalgebras,
which is compatible with the $G^\mathbb C$-action on $\mathcal M$.
Thus, we have a holomorphic $\mathcal G^\mathbb C$-action on $\mathcal M$
extending the $\mathcal G$-action.
Moreover, there is a morphism $\sigma:\mathcal G^\mathbb C\rightarrow
\Aut(\mathcal M)$ of Lie supergroups as in Theorem~\ref{thm: universal property}.
\end{rmk}

\begin{ex}
 Let $\mathcal M=\mathbb C^{0|1}$.
 Denoting the odd coordinate on $\mathbb C^{0|1}$ by $\xi$,
 each super vector field on $\mathbb C^{0|1}$ is of the form
 $X=a \xi\frac{\partial}{\partial \xi}+ b\frac{\partial}{\partial\xi}$
 for $a,b\in\mathbb C$. The flow 
 $\varphi:\mathbb C\times \mathcal M\rightarrow\mathcal M$ of
 $a \xi\frac{\partial}{\partial \xi}$ is given by
 $(\varphi_t)^*( \xi)=e^{at}\xi$, 
 and the flow $\psi:\mathbb C^{0|1}\times\mathcal M\rightarrow \mathcal M$
 of $b\frac{\partial}{\partial\xi}$ by $\psi^*(\xi)=b\tau+\xi$.
 Let $X_0=\xi\frac{\partial}{\partial \xi}$ and
 $X_1=\frac{\partial}{\partial\xi}$.
 Then $\mathrm{Vec}(\mathbb C^{0|1})=\mathbb C X_0\oplus \mathbb C X_1=
 \mathbb C^{1|1}$, where the Lie algebra structure on $\mathbb C^{1|1}$
 is given by $[X_0,X_1]= -X_1$ and $[X_1,X_1]=0$.
 Note that this Lie superalgebra is isomorphic to the Lie superalgebra of 
 right-invariant vector fields on the Lie supergroup $(\mathbb C^{1|1},
 \mu_{0,1})$, where the multiplication $\mu=\mu_{0,1}$
 is given by $\mu^*(t)=t_1+t_2$ and $\mu^*(\tau)=\tau_1+e^{t_1} \tau_2$;
 for the Lie supergroup structures on $\mathbb C^{1|1}$ see e.g. 
 \cite{GarnierWurzbacher}, Lemma~3.1. In particular, the Lie superalgebra
 $\mathrm{Vec}(\mathbb C^{0|1})$ is not abelian.  
 
 Since each automorphism $\varphi$ of $\mathbb C^{0|1}$ is given by 
 $\varphi^*(\xi)=c\cdot\xi$ for some $c\in\mathbb C$, $c\neq 0$, 
 we have $\Aut_{\bar 0}(\mathbb C^{0|1})\cong \mathbb C^*$.
\end{ex}

\section{The case of a superdomain with bounded underlying domain}
 In the classical case, the automorphism group of a bounded domain 
 $U\subset \mathbb C^m$ is a (real) Lie group (see Theorem~13 in
 ``Sur les groupes de transformations analytiques'' in \cite{CartanOeuvres}).
 If $\mathcal U\subset \mathbb C^{m|n}$ is a superdomain whose underlying 
 set $U$ is a bounded domain in $\mathbb C^m$, it is in general not 
 possible to endow its set of automorphisms with the structure of a 
 Lie group such that the action on $\mathcal U$ is smooth, as will be 
 illustrated in an example.
 In particular, there is no Lie supergroup satisfying the universal
 property as the automorphism group of a compact complex
 supermanifold $\mathcal M$ does as formulated in 
 Theorem~\ref{thm: universal property}.
 
\begin{ex}
 Consider the superdomain $\mathcal U$ of dimension $(1|2)$ with bounded 
 underlying domain $U\subset \mathbb C$.
 Let $z,\theta_1,\theta_2$ denote coordinates for $\mathcal M$.
 For any holomorphic function $f$ on~$U$, define the even super vector field
 $X_f=f(z)\theta_1\theta_2\frac{\partial}{\partial z}$.
 The reduced vector field $\tilde{X}_f=0$ is completely integrable and 
 thus the flow of $X_f$ can be defined on $\mathbb C\times \mathcal U$
 (c.f. \cite{GarnierWurzbacher} Lemma~5.2).
 The flow is given by $(\varphi_t)^*(z)=z+t\cdot f(z)\theta_1\theta_2$ 
 and $(\varphi_t)^*(\theta_j)=\theta_j$.
 For any holomorphic functions $f$ and $g$ we have $[X_f,X_g]=0$, and thus
 their flows commute (c.f. \cite{Bergner}, Corollary~3.8).
 Therefore, $\{X_f|\,f\in\mathcal O(U)\}\cong \mathcal O(U)$ is an uncountable 
 infinite-dimensional abelian Lie algebra. If the set of automorphisms of 
 $\mathcal U$ carried the structure of a Lie group such that its action on 
 $\mathcal U$ was smooth, its Lie algebra would necessarily contain
 $\{X_f|\,f\in\mathcal O(U)\}\cong \mathcal O(U)$ as a Lie subalgebra, 
 which is not possible.
\end{ex}

\section{Examples}

In this section, we determine the automorphism group $\Aut(\mathcal M)$
for some complex supermanifolds $\mathcal M$ with underlying manifold
$M=\mathbb P_1\mathbb C$.

Let $L_1$ denote the hyperplane bundle on $M=\mathbb P_1\mathbb C$ with 
sheaf of sections $\mathcal O(1)$,
and $L_k=(L_1)^{\otimes k}$ the line bundle of degree $k$, $k\in\mathbb Z$,
on $\mathbb P_1\mathbb C$, and sheaf of sections $\mathcal O(k)$.
Each holomorphic vector bundle on $\mathbb P_1\mathbb C$ is isomorphic
to a direct sum of line bundles $L_{k_1}\oplus\ldots\oplus L_{k_n}$ (see
\cite{Grothendieck}).
Therefore, if $\mathcal M$ is a split supermanifold with 
$M=\mathbb P_1\mathbb C$ and $\dim \mathcal M=(1|n)$,
there exist $k_1,\ldots, k_n\in \mathbb Z$ such that
the structure sheaf $\mathcal O_{\mathcal M}$ of $\mathcal M$
is isomorphic to 
$$\bigwedge (\mathcal O(k_1)\oplus\ldots\oplus \mathcal O(k_n)).$$
Let $U_j=\{[z_0:z_1]\in \mathbb P_1\mathbb C\,|\,
z_j\neq 0\}$, $j=1,2$, and $\mathcal U_j=(U_j,\mathcal O_{\mathcal M}|_{U_j})$.
Moreover, define ${U_0}^*=U_0\setminus \{[1:0]\}$ and 
${U_1}^*=U_1\setminus\{[0:1]\}$, and let  
${\mathcal U_j}^*=({U_j}^*,\mathcal O_{\mathcal M}|_{{U_j}^*})$.
We can now choose local coordinates $z,\theta_1,\ldots, \theta_n$ for 
$\mathcal M$ on $U_0$, and local coordinates 
$w,\eta_1,\ldots,\eta_n$ on $U_1$
so that the transition map 
$\chi:{\mathcal U_0}^*\rightarrow {\mathcal U_1}^*$, which determines
the supermanifold structure of~$\mathcal M$, is given by
$$\chi^*(w)=\frac 1 z \ \ \text{ and }\ \ 
\chi^*(\eta_j)={z^{k_j}}\theta_j.$$

\begin{ex}
 Let $\mathcal M=(\mathbb P_1\mathbb C, \mathcal O_\mathcal M)$ be a complex 
 supermanifold with $\dim\mathcal M=(1|1)$. 
 Since the odd dimension is $1$, the supermanifold $\mathcal M$ has to be split.
 Let $-k\in \mathbb Z$ be the degree of the associated line bundle.
 Choose local coordinates $z,\theta$ for $\mathcal M$ on $U_0$ and $w,\eta$ 
 on $U_1$ as above so that the transition map
 $\chi:{\mathcal U_0}^*\rightarrow {\mathcal U_1}^*$ is given by
 $\chi^*(w)=\frac 1 z$ and $\chi^*(\eta)=\frac{1}{z^k}\theta$.
 
 We first want to determine the Lie superalgebra $\mathrm{Vec}(\mathcal M)$
 of super vector fields on $\mathcal M$.
 A calculation in local coordinates verifying the compatibility condition 
 with the transition map~$\chi$ yields that 
 the restriction to $U_0$ of any super vector field on $\mathcal M$ is of 
 the form 
 $$\left((\alpha_0+\alpha_1z+\alpha_2z^2)\frac{\partial}{\partial z}
 +(\beta+k\alpha_2 z)\theta\frac{\partial}{\partial \theta}\right)
 +\left(p(z)\frac{\partial}{\partial\theta}
 +q(z)\theta\frac{\partial}{\partial z}\right),$$
 where $\alpha_0,\alpha_1,\alpha_2,\beta\in\mathbb C$,
 $p$ is a polynomial of degree at most $k$, and $q$ is a polynomial
 of degree at most $2-k$. If $k<0$ (respectively $2-k<0$), the polynomial~$p$
 (respectively $q$) is~$0$.
 The Lie algebra $\mathrm{Vec}_{\bar 0}(\mathcal M)$
 of even super vector fields is isomorphic to 
 $\mathfrak{sl}_2(\mathbb C)\oplus \mathbb C$,
 where an isomorphism $\mathfrak{sl}_2(\mathbb C)\oplus \mathbb C \rightarrow
 \mathrm{Vec}_{\bar 0}(\mathcal M)$ is given by
 $$\left(\left(\begin{matrix}
          a&b\\c&-a
         \end{matrix}\right), d\right)
   \mapsto
   (-b-2az+cz^2)\frac{\partial}{\partial z}
   +((d-ka)+kcz)\theta\frac{\partial}{\partial \theta}.$$

Note that since the odd dimension of $\mathcal M$ is $1$ each automorphism 
$\varphi:\mathcal M\rightarrow\mathcal M$ gives rise to an automorphism 
of the line bundle $L_{-k}$ and vice versa. 
Hence, the automorphism group $\Aut(L_{-k})$ of the line bundle $L_{-k}$ and 
$\Aut_{\bar 0}(\mathcal M)$ coincide.

A calculation yields that the group $\Aut_{\bar 0}(\mathcal M)$
of automorphisms $\mathcal M\rightarrow\mathcal M$ can be identified 
with $\mathrm{PSL}_2(\mathbb C)\times \mathbb C^*$ if $k$ is even
and with $\mathrm{SL}_2(\mathbb C)\times \mathbb C^*$ if $k$ is odd.
Consider the element
$\left(\left(\begin{smallmatrix} a & b\\ c& d\end{smallmatrix}\right),s\right)$,
where $s\in\mathbb C^*$ and 
$\left(\begin{smallmatrix} a & b\\ c& d\end{smallmatrix}\right)$ is
either an element of $\mathrm{SL}_2(\mathbb C)$ or the representative of the 
corresponding class in $\mathrm{PSL}_2(\mathbb C)$.
The action of the corresponding element $\varphi\in\Aut_{\bar 0}(\mathcal M)$ 
on $\mathcal M$ is then given by
$$\varphi^*(z)=\frac{c+dz}{a+bz}\ \ \text{ and } \ \ 
\varphi^*(\theta)=\left(\frac{1}{(a+bz)^k}+s\right)\theta$$
as a morphism over appropriate subsets of $U_0$ and by
$$\varphi^*(w)=\frac{aw+b}{cw+d}\ \ \text{ and } \ \
\varphi^*(\eta)=\left(\frac{1}{(cw+d)^k}+s\right)\eta$$
over appropriate subsets of $U_1$.

The Lie supergroup structure on $\Aut(\mathcal M)$ is now uniquely determined
by $\Aut_{\bar 0}(\mathcal M)$, $\mathrm{Vec}(\mathcal M)$, and the adjoint
action of
$\Aut_{\bar 0}(\mathcal M)$ on $\mathrm{Vec}(\mathcal M)$.
Since $\Aut_{\bar 0}(\mathcal M)$ is connected, it is enough to 
calculate the adjoint action of $\mathrm{Vec}_{\bar 0}(\mathcal M)
\cong \mathfrak{sl}_2{\mathbb C} \oplus \mathbb C$ on 
$\mathrm{Vec}_{\bar 1}(\mathcal M)$.

Let $P_l$ denote the space of polynomials of degree at most $l$, and 
set $P_l=\{0\}$ for $l<0$.
The space of odd super vector fields $\mathrm{Vec}_{\bar 1}(\mathcal M)$
is isomorphic to $P_{k}\oplus P_{2-k}$ via 
$\left(p(z)\frac{\partial}{\partial\theta}
 +q(z)\theta\frac{\partial}{\partial z}\right)
 \mapsto (p(z),q(z))$.

The element $H=\left(\begin{smallmatrix}
                1&0\\0&-1
               \end{smallmatrix}\right)
\in \mathfrak{sl}_2(\mathbb C)\subset \mathfrak{sl}_2(\mathbb C)\oplus\mathbb C
\cong\mathrm{Vec}_{\bar 0}(\mathcal M)$
corresponds to 
$-2z\frac{\partial}{\partial z}-k\theta\frac{\partial}{\partial \theta}$.
The adjoint action of this super vector field on the first factor $P_k$ of 
$\mathrm{Vec}_{\bar 1}(\mathcal M)$ is given by
by $-2z\frac{\partial}{\partial z}+k\cdot\mathrm{Id}$, and
on the second factor $P_{2-k}$ by $-2z\frac{\partial}{\partial z}+ (2-k)\cdot
\mathrm{Id}$.
Calculating the weights of the $\mathfrak{sl}_2(\mathbb C)$-representation on
$P_k$ and $P_{2-k}$, we get that $P_k$ is the unique irreducible 
$(k+1)$-dimensional representation and $P_{2-k}$ the unique irreducible
$(3-k)$-dimensional representation.
Moreover, a calculation yields that $d\in\mathbb C$ corresponding to 
$d\cdot\theta\frac{\partial}{\partial\theta}\in\mathrm{Vec}_{\bar 0}(\mathcal M)$ 
acts on
$P_k$ by multiplication with~$-d$ and on $P_{2-k}$ by multiplication with~$d$.

If $k<0$ or $k>2$, we have 
$$[\mathrm{Vec}_{\bar 1}(\mathcal M), \mathrm{Vec}_{\bar 1}(\mathcal M)]=0.$$
In the case $k=0$, we have $P_k\cong \mathbb C$. Since 
$[\frac{\partial}{\partial \theta},q(z)\theta\frac{\partial}{\partial z}]
=q(z)\frac{\partial}{\partial z}$ for any $q\in P_2$, we
get 
$$[\mathrm{Vec}_{\bar 1}(\mathcal M), \mathrm{Vec}_{\bar 1}(\mathcal M)]
=\left.\left\{a(z)\frac{\partial}{\partial z}\,\right|\,a\in P_2\right\}
\cong \mathfrak{sl}_2(\mathbb C),$$
and the map $P_0\times P_2\rightarrow \mathrm{Vec}_{\bar 0}(\mathcal M)$,
$(X,Y)\mapsto [X,Y]$,
corresponds to $\mathbb C\times P_2\rightarrow \mathrm{Vec}_{\bar 0}(\mathcal M)$,
$(p,q(z))\mapsto p\cdot q(z)\frac{\partial}{\partial z}$.

Similarly, if $k=2$, we have $P_{2-k}\cong \mathbb C$, and 
$$[\mathrm{Vec}_{\bar 1}(\mathcal M), \mathrm{Vec}_{\bar 1}(\mathcal M)]
=\left\{\left. (\alpha_0+\alpha_1z+\alpha_2 z^2)\frac{\partial}{\partial z}
+(\alpha_1+2\alpha_2 z)\theta\frac{\partial}{\partial \theta}\,\right|\,
\alpha_0,\alpha_1,\alpha_2\in\mathbb C\right\}
\cong\mathfrak{sl}_2(\mathbb C)$$ since
$[p(z)\frac{\partial}{\partial \theta}, \theta\frac{\partial}{\partial z}]
=p(z)\frac{\partial}{\partial z}+
p'(z)\theta\frac{\partial}{\partial\theta}$,
and the map $P_2\times P_0\rightarrow\mathrm{Vec}_{\bar 0}(\mathcal M)$,
$(X,Y)\mapsto [X,Y]$,
corresponds to $P_2\times\mathbb C\rightarrow \mathrm{Vec}_{\bar 0}(\mathcal M)$,
$(p(z),q)\mapsto q\cdot p(z)\frac{\partial}{\partial z}+q\cdot p'(z)\theta
\frac{\partial}{\partial \theta}$.

If $k=1$, then $P_k\oplus P_{2-k}\cong \mathbb C^2\oplus \mathbb C^2$.
We have
$$\begin{array}{lcrclcr}
 \left[\frac{\partial}{\partial \theta},\theta\frac{\partial}{\partial z}\right]
&=&\frac{\partial}{\partial z},& 
&\left[z\frac{\partial}{\partial \theta},\theta\frac{\partial}{\partial z}\right]
&=&z\frac{\partial}{\partial z}+\theta\frac{\partial}{\partial \theta},\\
&&&&&&\\
\left[\frac{\partial}{\partial \theta},z\theta\frac{\partial}{\partial z}\right]
&=&z\frac{\partial}{\partial z},&
&\left[z\frac{\partial}{\partial \theta},z\theta\frac{\partial}{\partial z}\right]
&=&z^2\frac{\partial}{\partial z}+z\theta\frac{\partial}{\partial \theta},
\end{array}$$
and consequently 
$[\mathrm{Vec}_{\bar 1}(\mathcal M),\mathrm{Vec}_{\bar 1}(\mathcal M)]
=\mathrm{Vec}_{\bar 0}(\mathcal M)$.

Remark that $\Aut(\mathcal M)$ carries the structure of a split Lie supergroup
if and only if $k<0$ or $k>2$ (c.f. Proposition~$4$ in \cite{Vishnyakova}).
\end{ex}

\begin{ex}
 Let $\mathcal M=(\mathbb P_1\mathbb C,\mathcal O_{\mathcal M})$
 be a split complex supermanifold of dimension $\dim\mathcal M=(1|2)$
 associated to  $\mathcal O(-k_1)\oplus \mathcal O(-k_2)$, 
 $k_1,k_2\in\mathbb Z$.
 We will determine the group $\Aut_{\bar 0}(\mathcal M)$ of
 automorphisms $\mathcal M\rightarrow\mathcal M$.
 
 We choose coordinates $z,\theta_1,\theta_2$ for $\mathcal U_0$ and 
 $w,\eta_1,\eta_2$ for $\mathcal U_1$ as described above such that 
 the transition map $\chi$ is given by 
 $\chi^*(w)=z^{-1}$ and $\chi^*(\eta_j)={z^{-k_j}}\theta_j$.
 
 The action of $\mathrm{PSL}_2(\mathbb C)$ on $\mathbb P_1\mathbb C$ 
 by M\"obius transformations lifts to an action of $\mathrm{SL}_2(\mathbb C)$
 on $\mathcal M$ by letting $A=\left(\begin{smallmatrix}
                                    a&b\\ c&d
                                   \end{smallmatrix}\right)
 \in \mathrm{SL}_2(\mathbb C)$
 act by the automorphism $\varphi_A:\mathcal M\rightarrow \mathcal M$
 with pullback
 $$\varphi_A^*(z)=\frac{c+dz}{a+bz}\ \ \text{ and }\ \ 
 \varphi_A^*(\theta_j)=(a+bz)^{-k_j}\theta_j$$
 as a morphism over appropriate subsets of $U_0$, and
 $$\varphi_A^*(w)=\frac{aw+b}{cw+d}\ \ \text{ and }\ \
 \varphi_A^*(\eta_j)=(cw+d)^{-k_j}\eta_j$$
 over appropriate subsets of $U_1$. Using the transition map $\chi$ one might
 also calculate the representation of $\varphi$ in coordinates as a morphism 
 over subsets $U_0\rightarrow U_1$ and $U_1\rightarrow U_0$.
 
 If $k_1$ and $k_2$ are both even, we have $\varphi_A=\mathrm{Id}_\mathcal M$ 
 for $A=\left(\begin{smallmatrix}
                                    -1&0\\ 0&-1
                                   \end{smallmatrix}\right)$
 and thus we get an action of $\mathrm{PSL}_2(\mathbb C)$
 on $\mathcal M$.
 
 Consider the homomorphism of Lie groups $\Psi:\Aut_{\bar 0}(\mathcal M)
 \rightarrow\Aut(\mathbb P_1\mathbb C)$ assigning to each automorphism
 $\varphi:\mathcal M\rightarrow \mathcal M$ the underlying biholomorphic
 map $\tilde\varphi:\mathbb P_1\mathbb C\rightarrow \mathbb P_1\mathbb C$.
 This homomorphism $\Psi$ is surjective since $\Aut(\mathbb P_1\mathbb C)
 \cong\mathrm{PSL}_2(\mathbb C)$ and since the 
 $\mathrm{PSL}_2(\mathbb C)$-action on $\mathbb P_1\mathbb C$ lifts to an action 
 (of $\mathrm{SL}_2(\mathbb C)$) on the supermanifold $\mathcal M$.
 The kernel $\ker \Psi$ of the homomorphism $\Psi$ consists of those
 automorphisms $\varphi:\mathcal M\rightarrow\mathcal M$ whose underlying map
 $\tilde\varphi$ is the identity 
 $\mathbb P_1\mathbb C\rightarrow\mathbb P_1\mathbb C$.
 This kernel $\ker \Psi$ is a normal subgroup, $\mathrm{SL}_2(\mathbb C)$
 acts on $\ker\Psi$, and we have
 $$\Aut_{\bar 0}(\mathcal M)\cong\ker\Psi\rtimes \mathrm{SL}_2(\mathbb C)$$
 if $k_1$ and $k_2$ are not both even, and
 $\Aut_{\bar 0}(\mathcal M)\cong\ker\Psi\rtimes \mathrm{PSL}_2(\mathbb C)$
 if $k_1$ and $k_2$ are even.
 Thus, it remains to determine $\ker\Psi$.
 
 Let $\varphi:\mathcal M\rightarrow\mathcal M$ be an automorphism with 
 $\tilde\varphi=\mathrm{Id}$.
 Let $f$ and $b_{jk}$, $j,k=1,2$, be holomorphic functions on 
 $U_0\cong \mathbb C$
 such that the pullback of $\varphi$ over $U_0$ is given by
 $$\varphi^*(z)=z+f(z)\theta_1\theta_2\ \ \text{ and }\ \ 
 \varphi^*(\theta)=B(z)\theta,$$
 where $B(z)=\left(\begin{smallmatrix}
                    b_{11}(z)&b_{12}(z)\\
                    b_{21}(z)&b_{22}(z)
                   \end{smallmatrix}\right)$
 and $\varphi^*(\theta)=B(z)\theta$ is an abbreviation for 
 $\varphi^*(\theta_j)=b_{j1}(z)\theta_1+b_{j2}(z)\theta_2$ for $j=1,2$.
 Similarly, let $g$ and $c_{jk}$ be holomorphic functions on $U_1\cong\mathbb C$
 such that the pullback of $\varphi$ over $U_1$ is given by
 $$\varphi^*(w)=w+g(w)\eta_1\eta_2\ \ \text{ and }\ \
 \varphi^*(\eta)=C(z)\eta,$$
 where $C(z)=\left(\begin{smallmatrix}
                    c_{11}(z)&c_{12}(z)\\
                    c_{21}(z)&c_{22}(z)
                   \end{smallmatrix}\right)$.
 The compatibility condition with the transition map $\chi$
 gives now the relation 
 $$f(z)=-z^{2-(k_1+k_2)}g\left(\frac 1 z\right)\ 
 \text{ for  } z\in\mathbb C^*.$$
 Therefore, $f$ and $g$ are both polynomials of degree at most $2-(k_1+k_2)$,
 and they are $0$ in the case $k_1+k_2>2$.
 For the matrices $B$ and $C$ we get the relation
 $$B(z)=\begin{pmatrix}
               z^{k_1}&0\\0&z^{k_2}
              \end{pmatrix}C\left(\frac 1 z\right)
              \begin{pmatrix}
               z^{-k_1}&0\\0&z^{-k_2}
              \end{pmatrix}\ \text{ for } z\in\mathbb C^*.
$$
If $k_1=k_2$, this implies $B(z)=C\left(\frac 1 z\right)$ for all 
$z\in\mathbb C^*$. 
Thus, $B(z)=B$ and $C(w)=C$ are constant matrices, and $B=C\in 
\mathrm{GL}_2(\mathbb C)$ since $\varphi$ was assumed to be invertible.
Consequently, 
we have $$\ker \Psi\cong P_{2-(k_1+k_2)}\rtimes \mathrm{GL}_2(\mathbb C)$$
in the case $k_1=k_2$, 
where $P_{2-(k_1+k_2)}$ denotes the space of polynomials of degree at 
most $2-(k_1+k_2)$ if $k_1+k_2< 2$ and $P_{2-(k_1+k_2)}=\{0\}$ otherwise.
The group structure on the semidirect product is given by
$(f_1(z),B_1)\cdot (f_2(z),B_2)=(\det B_1 f_1(z)+f_2(z), B_1B_2)$.

Let now $k_1\neq k_2$. After possibly changing coordinates we may assume
$k_1>k_2$.
Then we have 
$$B(z)
 =\begin{pmatrix}
               z^{k_1}&0\\0&z^{k_2}
              \end{pmatrix}C\left(\frac 1 z\right)
              \begin{pmatrix}
               z^{-k_1}&0\\0&z^{-k_2}
              \end{pmatrix}
 =\begin{pmatrix}
   c_{11}\left(\frac 1 z\right) & z^{k_1-k_2}c_{12}\left(\frac 1 z\right)\\
   z^{k_2-k_1}c_{21}\left(\frac 1 z\right) & c_{22}\left(\frac 1 z\right)
  \end{pmatrix}$$
for all $z\in \mathbb C^*$.
This implies that $b_{11}=c_{11}$ and $b_{22}=c_{22}$ are constants.
Since we assume $k_1>k_2$, we also get $b_{21}=c_{21}=0$ and 
$b_{12}$ and $c_{12}$ are polynomials of degree at most $k_1-k_2$.
Therefore,
$$\ker \Psi\cong P_{2-(k_1+k_2)}\rtimes
\left\{ \left.\begin{pmatrix}
         \lambda & p(z)\\
         0&\mu
        \end{pmatrix}
 \,\right|\, \lambda,\mu\in\mathbb C^*,\, p\in P_{k_1-k_2}\right\},$$
 and the group structure is again given by 
 $(f_1(z),B_1)\cdot (f_2(z),B_2)=(\det B_1 f_1(z)+f_2(z), B_1B_2)$
 for $f_1,f_2\in P_{2-(k_1+k_2)}$, 
 $B_1,B_2\in \left\{ \left.\left(\begin{smallmatrix}
         \lambda & p(z)\\
         0&\mu
        \end{smallmatrix}\right)
 \,\right|\, \lambda,\mu\in\mathbb C^*,\, p\in P_{k_1-k_2}\right\}$.
 
 The semidirect product $\ker\Psi\rtimes \mathrm{SL}_2(\mathbb C)$
 (or $\ker\Psi\rtimes \mathrm{PSL}_2(\mathbb C)$)
 is a direct product if and only if
 $k_1=k_2$ and $k_1+k_2\geq 2$.
\end{ex}

\begin{ex}
 Let $\mathcal M=(\mathbb P_1\mathbb C, \mathcal O_\mathcal M)$
 be the complex supermanifold of dimension $\dim\mathcal M=(1|2)$ 
 given by the transition map $\chi:{\mathcal U_0}^*\rightarrow {\mathcal U_1}^*$
 with pullback 
 $$\chi^*(w)=\frac 1 z +\frac{1}{z^3}\theta_1\theta_2\ \ \text{ and }\ \ 
 \chi^*(\eta_j)=\frac{1}{z^2} \theta_j.$$
 The supermanifold $\mathcal M$ is not split
 and the associated split supermanifold corresponds to  
 $\mathcal O(-2)\oplus \mathcal O(-2)$; see e.g. \cite{BuneginaOnishchik}.
 
 As in the previous example, the action of $\mathrm{PSL}_2(\mathbb C)$ on 
 $\mathbb P_1\mathbb C$ by M\"obius transformations lifts
 to an action of $\mathrm{PSL}_2(\mathbb C)$ on $\mathcal M$.
 Let $A$ denote the class  of
 $\left(\begin{smallmatrix}
               a&b\\ c&d
              \end{smallmatrix}\right)\in\mathrm{SL}_2(\mathbb C)$
               in $\mathrm{PSL}_2(\mathbb C)$.
 Then $A$ acts by the morphism $\varphi_A:\mathcal M\rightarrow\mathcal M$
 whose pullback as a morphism over appropriate subsets of $U_0$ is given by
 $$\varphi_A^*(z)=\frac{c+dz}{a+bz}-\frac{b}{(a+bz)^3}\theta_1\theta_2
 \ \ \text{ and }\ \ 
 \varphi_A^*(\theta_j)=\frac{1}{(a+bz)^2}\theta_j.$$
 
 Let $\Psi:\Aut_{\bar 0}(\mathcal M)\rightarrow \Aut(\mathbb P_1\mathbb C)
 \cong \mathrm{PSL}_2(\mathbb C)$
 denote again the Lie group homomorphism which assigns to an automorphism of
 $\mathcal M$ the underlying automorphism of $\mathbb P_1\mathbb C$.
 The assignment $A\mapsto \varphi_A\in \Aut_{\bar 0}(\mathcal M)$
 defines a section $\mathrm{PSL}_2(\mathbb C)\rightarrow\Aut_{\bar 0}(\mathcal M)$
 of $\Psi$, and we have
 $$\Aut_{\bar 0}(\mathbb C)\cong \ker\Psi\rtimes \mathrm{PSL}_2(\mathbb C).$$
 The section $\mathrm{PSL}_2(\mathbb C)\rightarrow \Aut_{\bar 0}(\mathcal M)$
 induces on the level of Lie algebras the morphism
 $\sigma:\mathfrak{sl}_2(\mathbb C)\hookrightarrow \mathrm{Vec}_{\bar 0}(\mathcal M)$,
 which maps an element $\left(\begin{smallmatrix}
  a&b\\
  c&-a
 \end{smallmatrix}\right)\in\mathfrak{sl}_2(\mathbb C)$ to 
 the super vector field on $\mathcal M$ whose restriction to $\mathcal U_0$ is
 $$ \left(c-2az-bz^2-b\theta_1\theta_2\right)\frac{\partial}{\partial z}
 -2(a+bz)\left(\theta_1\frac{\partial}{\partial \theta_1}
+\theta_2\frac{\partial}{\partial \theta_2}\right).$$

 We now calculate the kernel $\ker\Psi$. Let $\varphi\in\ker\Psi$.
 Its underlying map $\tilde\varphi$ is the identity and we 
 thus have 
 $$\varphi^*(z)=z+a_0(z)\theta_1\theta_2\ \ \text{ and }\ \ 
 \varphi^*(\theta)=A_0(z)\theta$$
 on $U_0$ and 
 $$\varphi^*(w)=w+a_1(w)\eta_1\eta_2\ \ \text{ and }\ \ 
 \varphi^*(\eta)=A_1(w)\eta$$
 on $U_1$ for holomorphic functions $a_0$ and $a_1$ and 
 invertible matrices $A_0$ and $A_1$ whose entries are holomorphic functions.
 The notation $\varphi^*(\theta)=A_0(z)\theta$ (and similarly 
 $\varphi^*(\eta)=A_1(w)\eta$) 
 is again an abbreviation for 
 $\varphi^*(\theta_j)=(A_0(z))_{j1}\theta_1+(A_0(z))_{j2}\theta_2$,
 where $A_0(z)=\left((A_0(z))_{jk}\right)_{1\leq j,k\leq 2}$.
 A calculation with the transition map $\chi$ then yields
 the relations
 $$A_1(w)=A_0\left(\frac 1 w\right)
 \ \ \text{ and }\ \ 
 a_1(w)=\frac 1 w \left(\left(\det A_0\left(\frac 1 w \right)-1\right)
 -\frac 1 w a_0\left(\frac 1 w \right)\right)$$
 for any $w\in\mathbb C^*$.
 Since $a_0$, $a_1$, $A_0$, and $A_1$ are holomorphic on $\mathbb C$,
 we get that $A_0=A_1$ are constant matrices,
 $\det A_0=1$, and $a_0=a_1=0$.
 Therefore, 
 $\ker\Psi\cong \mathrm{SL}_2(\mathbb C)$,
 and its Lie algebra is 
 $$\mathrm{Lie}(\ker\Psi)
 =\left\{ \left.\left(a_{11}\theta_1+a_{12}\theta_2\right)
 \frac{\partial}{\partial \theta_1} +\left(a_{21}\theta_1+a_{22}\theta_2\right)
 \frac{\partial}{\partial \theta_2}\,\right|\, \begin{pmatrix} a_{11}&a_{12}\\
 a_{21}&a_{22} \end{pmatrix}\in\mathfrak{sl}_2(\mathbb C)\right\}.$$
 Since $\mathrm{Lie}(\ker\Psi)$ and 
 $\sigma(\mathrm{Lie}(\mathrm{PSL}_2(\mathbb C))$
 commute, the semidirect product $\ker\Psi\rtimes \mathrm{PSL}_2(\mathbb C)$
 is direct and we have
 $$\Aut_{\bar 0}(\mathcal M)\cong \mathrm{SL}_2(\mathbb C)\times 
 \mathrm{PSL}_2(\mathbb C).$$
 Remark in particular that this group is different from 
 the automorphism group of the corresponding split supermanifold $\mathcal N$, 
 which is associated to 
 $\mathcal O(-2)\oplus\mathcal O(-2)$, 
 with $\Aut_{\bar 0}(\mathcal N)
 \cong\mathrm{GL}_2(\mathbb C)\times \mathrm{PSL}_2(\mathbb C)$.
 \end{ex}

\bibliography{LiteraturAutomorphismengruppe.bib}

\end{document}